\theoremstyle{plain}
\newtheorem{thm}{Theorem}
\theoremstyle{remark}
\newtheorem{rem}{Remark}
\DeclareMathOperator{\td}{d\!}
\DeclareMathOperator{\te}{e}
\DeclareMathOperator{\ti}{i}
\DeclareMathOperator{\bell}{B}
\begin{document}

\title[Uniform treatments of Bernoulli and Stirling numbers]
{Uniform treatments of Bernoulli numbers, Stirling numbers, and their generating functions}

\author[F. Qi]{Feng Qi}
\address{School of Mathematics and Informatics, Henan Polytechnic University, Jiaozuo 454010, Henan, China;
Retired researcher, 17709 Sabal Court, Dallas, TX 75252-8024, USA;
School of Mathematics and Physics, Hulunbuir University, Hailar 021008, Inner Mongolia, China}
\email{\href{mailto: F. Qi <qifeng618@gmail.com>}{qifeng618@gmail.com}}
\urladdr{\url{https://orcid.org/0000-0001-6239-2968}}

\begin{abstract}
In this paper, by virtue of a determinantal formula for derivatives of the ratio between two differentiable functions, in view of the Fa\`a di Bruno formula, and with the help of several identities and closed-form formulas for the partial Bell polynomials $\bell_{n,k}$, the author
\begin{enumerate}
\item
establishes thirteen Maclaurin series expansions of the functions
\begin{align*}
&\ln\frac{\te^x+1}{2}, && \ln\frac{\te^x-1}{x}, && \ln\cosh x, \\ 
&\ln\frac{\sinh x}{x}, && \biggl[\frac{\ln(1+x)}{x}\biggr]^r, && \biggl(\frac{\te^x-1}{x}\biggr)^r
\end{align*}
for $r=\pm\frac{1}{2}$ and $r\in\mathbb{R}$ in terms of the Dirichlet eta function $\eta(1-2k)$, the Riemann zeta function $\zeta(1-2k)$, and the Stirling numbers of the first and second kinds $s(n,k)$ and $S(n,k)$.
\item
presents four determinantal expressions and three recursive relations for the Bernoulli numbers $B_{2n}$.
\item
finds out three closed-form formulas for the Bernoulli numbers $B_{2n}$ and the generalized Bernoulli numbers $B_n^{(r)}$ in terms of the Stirling numbers of the second kind $S(n,k)$, and deduce two combinatorial identities for the Stirling numbers of the second kind $S(n,k)$.
\item
acquires two combinatorial identities, which can be regarded as diagonal recursive relations, involving the Stirling numbers of the first and second kinds $s(n,k)$ and $S(n,k)$.
\item
recovers an integral representation and a closed-form formula, and establish an alternative explicit and closed-form formula, for the Bernoulli numbers of the second kind $b_n$ in terms of the Stirling numbers of the first kind $s(n,k)$.
\item
obtains three identities connecting the Stirling numbers of the first and second kinds $s(n,k)$ and $S(n,k)$.
\end{enumerate}
\par
The most highlights of this paper include the unification $\bigl(\frac{\te^x-1}{x}\bigr)^r$ of the generating functions of the Bernoulli numbers $B_n$ and the Stirling numbers of the second kind $S(n,k)$, the unification $\bigl[\frac{\ln(1+x)}{x}\bigr]^r$ of the generating functions of the Bernoulli numbers of the second kind $b_n$ and the Stirling numbers of the first kind $s(n,k)$, and the disclosure of the transformations between these two unifications.
\end{abstract}

\subjclass{Primary 05A15; Secondary 03D20, 11B73, 11B83, 26A09, 33B10, 41A58}

\keywords{Bernoulli number; Stirling number; generating function; unification; Maclaurin expansion; partial Bell polynomial; determinantal expression; closed-form formula; recursive relation; combinatorial identity}


\thanks{This paper was typeset using \AmS-\LaTeX}

\maketitle
\tableofcontents

\section{Preliminaries}
According to~\cite[Fact~13.3]{Bernstein=2018-MatrixMath}, for $z\in\mathbb{C}$ such that $\Re(z)>1$, the Riemann zeta function $\zeta(z)$ can be defined by
\begin{equation}\label{zeta-three-def-eq}
\zeta(z)=\sum_{k=1}^{\infty}\frac{1}{k^z}
=\frac{1}{1-2^{1-z}}\sum_{k=1}^{\infty}\frac{(-1)^{k-1}}{k^z}
=\frac{1}{1-2^{1-z}}\eta(z),
\end{equation}
where $\eta(z)$ is called the Dirichlet eta function.
In~\cite[Section~3.5, pp.~57--58]{Temme-96-book}, the Riemann zeta function $\zeta(z)$ is analytically extended from $\Re(z)>1$ to the punctured complex plane $\mathbb{C}\setminus\{1\}$ such that the only singularity $z=1$ is a simple pole with residue $1$. In other words, the Riemann zeta function $\zeta(z)$ is meromorphic with a simple pole at $z=1$. Consequently, by virtue of the relation~\eqref{zeta-three-def-eq}, the Dirichlet eta function
\begin{equation}\label{eta-zeta-equal}
\eta(z)=\bigl(1-2^{1-z}\bigr)\zeta(z)
\end{equation}
can be continued as an entire function in $z\in\mathbb{C}$. See also~\cite[Chapter~6]{Bateman-Diamond-B2004} and~\cite{Mon-Eta-Ratio.tex}.
\par
It is common knowledge that the Bernoulli numbers $B_n$ are generated by
\begin{equation}\label{Bernoulli-Gen-Eq}
\frac{z}{\te^z-1}=\sum_{n=0}^\infty B_n\frac{z^n}{n!}=1-\frac{z}2+\sum_{n=1}^\infty B_{2n}\frac{z^{2n}}{(2n)!}, \quad |z|<2\pi;
\end{equation}
that the Stirling numbers of the second kind $S(n,k)$ for $n\ge k\ge0$ can be analytically generated~\cite[pp.~131--132]{Quaintance-Gould-2016-B} by
\begin{equation}\label{2Stirl-funct-rew}
\biggl(\frac{\te^z-1}{z}\biggr)^k=\sum_{n=0}^\infty \frac{S(n+k,k)}{\binom{n+k}{k}} \frac{z^{n}}{n!}, \quad k\ge0;
\end{equation}
and that the Stirling numbers of the first kind $s(n,k)$ for $n\ge k\ge0$ can be analytically generated~\cite[Theorem~3.14]{Mansour-Schork-B2016} by
\begin{equation}\label{Stirl-No-First-GF}
\biggl[\frac{\ln(1+z)}{z}\biggr]^k=\sum_{n=0}^\infty \frac{s(n+k,k)}{\binom{n+k}{k}}\frac{z^{n}}{n!}, \quad |z|<1.
\end{equation}
\par
In~\cite[p.~40, Entry~5]{Bourbaki-Spain-2004}, we find a general derivative formula
\begin{equation}\label{Sitnik-Bourbaki-reform}
\hskip-8em\frac{\operatorname{d}^k}{\td x^k}\biggl[\frac{p(x)}{q(x)}\biggr]
=\frac{(-1)^k}{q^{k+1}(x)}
\begin{vmatrix}
p(x) & q(x) & 0 &\dotsm & 0 & 0\\
p'(x) & q'(x) & q(x) &\dotsm& 0 & 0\\
p''(x) & q''(x) & \binom{2}{1}q'(x) &\dotsm & 0 &0\\
\vdots & \vdots & \vdots & \ddots & \vdots & \vdots\\
p^{(k-2)}(x) & q^{(k-2)}(x) & \binom{k-2}{1}q^{(k-3)}(x) & \dotsm & q(x) & 0\\
p^{(k-1)}(x) & q^{(k-1)}(x) & \binom{k-1}{1}q^{(k-2)}(x) & \dotsm & \binom{k-1}{k-2}q'(x) & q(x)\\
p^{(k)}(x) & q^{(k)}(x) & \binom{k}{1}q^{(k-1)}(x) & \dotsm & \binom{k}{k-2}q''(x) & \binom{k}{k-1}q'(x)
\end{vmatrix}
\end{equation}
for $k\in\mathbb{N}_0$, where the functions $p(x)$ and $q(x)$ are of the $k$th derivatives.
Let $H_0=1$ and
\begin{equation*}
H_k=
\begin{vmatrix}
h_{1,1} & h_{1,2} & 0 & \dotsc & 0 & 0\\
h_{2,1} & h_{2,2} & h_{2,3} & \dotsc & 0 & 0\\
h_{3,1} & h_{3,2} & h_{3,3} & \dotsc & 0 & 0\\
\vdots & \vdots & \vdots & \ddots & \vdots & \vdots\\
h_{k-2,1} & h_{k-2,2} & h_{k-2,3} & \dotsc & h_{k-2,k-1} & 0 \\
h_{k-1,1} & h_{k-1,2} & h_{k-1,3} & \dotsc & h_{k-1,k-1} & h_{k-1,k}\\
h_{k,1} & h_{k,2} & h_{k,3} & \dotsc & h_{k,k-1} & h_{k,k}
\end{vmatrix}
\end{equation*}
for $k\in\mathbb{N}$. Theorem in~\cite[p.~222]{Cahill-Narayan-Fibonacci-2004} states that the sequence $H_k$ for $k\in\mathbb{N}_0$, with the assumption $H_1=h_{1,1}$, satisfies the recursive relation
\begin{equation}\label{Cahill-Narayan-Fibonacci-2004-Thm}
H_k=\sum_{\ell=1}^k(-1)^{k-\ell}h_{k,\ell} \Biggl(\prod_{j=\ell}^{k-1}h_{j,j+1}\Biggr) H_{\ell-1}
\end{equation}
for $k\ge2$, where an empty product is understood to be $1$. The derivative formula~\eqref{Sitnik-Bourbaki-reform} for the ratio of two differentiable functions and the recursive relation~\eqref{Cahill-Narayan-Fibonacci-2004-Thm} were applied in the papers~\cite{era-905.tex, ACM-23-0025-S.tex, 2Closed-Bern-Polyn2.tex, mathematics-131192.tex, Slovaca-4738.tex, CDM-68111.tex, Derange-Hess-Det-S.tex, axioms-2464084.tex} and closely related references therein.
\par
In~\cite[Definition~11.2]{Charalambides-book-2002} and~\cite[p.~134, Theorem~A]{Comtet-Combinatorics-74}, the partial Bell polynomials, or say, the Bell polynomials of the second kind, denoted by $\bell_{n,k}(x_1,x_2,\dotsc,x_{n-k+1})$ for $n\ge k\ge0$, are defined by
\begin{equation*}
\bell_{n,k}(x_1,x_2,\dotsc,x_{n-k+1})=\sum_{\substack{1\le i\le n-k+1\\ \ell_i\in\{0\}\cup\mathbb{N}\\ \sum_{i=1}^{n-k+1}i\ell_i=n\\
\sum_{i=1}^{n-k+1}\ell_i=k}}\frac{n!}{\prod_{i=1}^{n-k+1}\ell_i!} \prod_{i=1}^{n-k+1}\biggl(\frac{x_i}{i!}\biggr)^{\ell_i}.
\end{equation*}
The famous Fa\`a di Bruno formula, see~\cite[Theorem~11.4]{Charalambides-book-2002} and~\cite[p.~139, Theorem~C]{Comtet-Combinatorics-74}, can be described in terms of $\bell_{n,k}(x_1,x_2,\dotsc,x_{n-k+1})$ by
\begin{equation}\label{Bruno-Bell-Polynomial}
\frac{\operatorname{d}^n[f\circ h(x)]}{\td x^n}=\sum_{k=0}^nf^{(k)}(h(x)) \bell_{n,k}\bigl(h'(x),h''(x),\dotsc,h^{(n-k+1)}(x)\bigr)
\end{equation}
for $n\in\mathbb{N}_0$. The partial Bell polynomials $\bell_{n,k}(x_1,x_2,\dotsc,x_{n-k+1})$ satisfy
\begin{align}\label{Bell-Stir1st=eq}
\bell_{n,k}\biggl(\frac{1!}2,\frac{2!}3,\dotsc,\frac{(n-k+1)!}{n-k+2}\biggr)
=\frac{(-1)^{n-k}}{k!}&\sum_{m=0}^k(-1)^m\binom{k}{m}\frac{s(n+m,m)}{\binom{n+m}{m}},\\
\label{Bell(n-k)}
\bell_{n,k}\bigl(abx_1,ab^2x_2,\dotsc,ab^{n-k+1}x_{n-k+1}\bigr) &=a^kb^n\bell_{n,k}(x_1,x_2,\dotsc,x_{n-k+1}),\\
\label{Bell-stirling}
\bell_{n,k}(\underbrace{1,1,\dotsc,1}_{n-k+1})&=S(n,k),\\
\label{Bell=0!s(n-k)}
\bell_{n,k}(0!,1!,2!,\dotsc,(n-k)!)&=(-1)^{n-k}s(n,k),
\end{align}
and
\begin{equation}\label{B-S-frac-value}
\bell_{n,k}\biggl(\frac12, \frac13,\dotsc,\frac1{n-k+2}\biggr)
=\frac{n!}{(n+k)!}\sum_{\ell=0}^k(-1)^{k-\ell}\binom{n+k}{k-\ell}S(n+\ell,\ell);
\end{equation}
These identities can be found in~\cite[p.~412]{Charalambides-book-2002}, \cite[p.~135]{Comtet-Combinatorics-74}, \cite[Theorem~1.1]{1st-Stirl-No-adjust.tex}, and~\cite[Sections~1.1, 1.2, 1.7, and~1.8]{Bell-value-elem-funct.tex}, respectively.
\par
Theorem~3.1 in~\cite{integer2Bell2real.tex} states that, for any $\ell\in\mathbb{N}_0$, if the series expansion
\begin{equation*}
f^\ell(z)=\sum_{j=0}^{\infty}C_{\ell,j}\frac{z^j}{j!}
\end{equation*}
is valid, then the series expansion
\begin{equation}\label{Z0-alpha-ser-expan}
f^\alpha(z)=\sum_{n=0}^{\infty}\Biggl[\sum_{k=0}^{n} \frac{(-\alpha)_k}{k!} \sum_{q=0}^{k}(-1)^{q}\binom{k}{q} f^{\alpha-q}(0)C_{q,n}\Biggr] \frac{z^n}{n!}
\end{equation}
is also valid for any $\alpha\in\mathbb{R}$.
\par
For $\lambda\in\mathbb{C}$ and $n\in\mathbb{N}_0$, the $n$th falling factorial is defined~\cite[p.~165]{CDM-68111.tex} by
\begin{equation}\label{Fall-Factorial-Dfn-Eq}
\langle\lambda\rangle_n=
\prod_{k=0}^{n-1}(\lambda-k)=
\begin{cases}
\lambda(\lambda-1)\dotsm(\lambda-n+1), & n\ge1;\\
1,& n=0.
\end{cases}
\end{equation}
For $\lambda\in\mathbb{C}$ and $n\in\mathbb{N}_0$, the $n$th rising factorial $(\lambda)_n$, or say, the Pochhammer symbol or shifted factorial, is defined~\cite[p.~167]{CDM-68111.tex} by
\begin{equation*}
(\lambda)_n=\prod_{\ell=0}^{n-1}(\lambda+\ell)
=
\begin{cases}
\lambda(\lambda+1)\dotsm(\lambda+n-1), & n\ge1;\\
1, & n=0.
\end{cases}
\end{equation*}
The falling and rising factorials have the relations
\begin{equation*}
(-\lambda)_n=(-1)^n\langle \lambda\rangle_n\quad \text{and}\quad \langle-\lambda\rangle_n=(-1)^n(\lambda)_n
\end{equation*}
for $\lambda\in\mathbb{C}$ and $n\in\mathbb{N}_0$.

\section{Motivations and main results}

At the web site \url{https://math.stackexchange.com/q/307274/} (accessed on 15 March 2025), Gottfried Helms asked to confirm the expansions
\begin{equation}\label{Helms-1stP}
\ln(\te^x+1)=\sum_{k=0}^{\infty}\eta(1-k)\frac{x^k}{k!}
\end{equation}
and
\begin{equation}\label{Helms-2ndP}
\ln(\te^x-1)=\ln x-\sum_{k=1}^{\infty}\zeta(1-k)\frac{x^k}{k!}.
\end{equation}
\par
Among other things, the expansion~\eqref{Helms-1stP} was confirmed in~\cite{CMP6528.tex, log-secant-norm-tail.tex} by

\begin{thm}\label{Helms-variant-ser-thm}
For $|x|<\pi$, we have
\begin{equation}\label{Helms-variant-ser}
\ln\frac{\te^x+1}{2}=\frac{x}{2}+\sum_{k=1}^{\infty}\eta(1-2k)\frac{x^{2k}}{(2k)!}.
\end{equation}
\end{thm}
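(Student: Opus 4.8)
The plan is to reduce \eqref{Helms-variant-ser} to a single term-by-term integration. Differentiating the left-hand side turns it into the simpler function $\frac{\te^x}{\te^x+1}=1-\frac{1}{\te^x+1}$, whose Maclaurin coefficients are governed by the Bernoulli generating function \eqref{Bernoulli-Gen-Eq}; integrating back and using the relation \eqref{eta-zeta-equal} between $\eta$ and $\zeta$ then produces the stated coefficients $\eta(1-2k)$.

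Concretely, I would combine the elementary identity $\frac{1}{\te^x+1}=\frac{1}{\te^x-1}-\frac{2}{\te^{2x}-1}$ with \eqref{Bernoulli-Gen-Eq}, applied to $z=x$ and to $z=2x$, to obtain
\[
\frac{x}{\te^x+1}=\sum_{n=0}^{\infty}\bigl(1-2^{n}\bigr)B_n\frac{x^n}{n!}.
\]
Since the $n=0$ coefficient $\bigl(1-2^0\bigr)B_0$ vanishes, division by $x$ gives $\frac{1}{\te^x+1}=\sum_{n=1}^{\infty}\bigl(1-2^{n}\bigr)B_n\frac{x^{n-1}}{n!}$; subtracting this series from $1$, peeling off the $n=1$ term, which equals $\frac12$, and using $B_n=0$ for odd $n\ge3$ (cf.\ \eqref{Bernoulli-Gen-Eq}) leaves
\[
\frac{\te^x}{\te^x+1}=\frac12+\sum_{k=1}^{\infty}\bigl(2^{2k}-1\bigr)B_{2k}\frac{x^{2k-1}}{(2k)!},\qquad |x|<\pi,
\]
the radius $\pi$ being the distance from $0$ to the zeros $x=\pm\pi\ti$ of $\te^x+1$. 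Integrating from $0$ to $x$ (legitimate on $|x|<\pi$ by uniform convergence on compact subdisks) and using $\ln\frac{\te^0+1}{2}=0$ yields
\[
\ln\frac{\te^x+1}{2}=\frac{x}{2}+\sum_{k=1}^{\infty}\frac{\bigl(2^{2k}-1\bigr)B_{2k}}{2k}\frac{x^{2k}}{(2k)!}.
\]

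It remains only to identify the coefficient. By \eqref{eta-zeta-equal} one has $\eta(1-2k)=\bigl(1-2^{1-(1-2k)}\bigr)\zeta(1-2k)=\bigl(1-2^{2k}\bigr)\zeta(1-2k)$, and with the classical evaluation $\zeta(1-2k)=-\frac{B_{2k}}{2k}$ this equals $\frac{(2^{2k}-1)B_{2k}}{2k}$, which is precisely the coefficient found above; this proves \eqref{Helms-variant-ser}. I do not expect a genuine obstacle here: the delicate points are only the bookkeeping of the $n=0$ and $n=1$ contributions, so that the half-integer linear term $\frac{x}{2}$ emerges cleanly while every remaining term is an even power, and the appeal to the two standard facts \eqref{eta-zeta-equal} and $\zeta(1-2k)=-B_{2k}/(2k)$ for the final matching. (As a consistency check one may instead write $\frac{\te^x+1}{2}=\te^{x/2}\cosh\frac{x}{2}$, which recasts \eqref{Helms-variant-ser} as the Maclaurin expansion of $\ln\cosh\frac{x}{2}$ together with the term $\frac{x}{2}$, thereby linking the result to the $\ln\cosh$ expansion treated later in the paper; the route above, however, is self-contained.)
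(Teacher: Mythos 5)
Your proposal is correct, and every step checks out: the identity $\frac{1}{\te^x+1}=\frac{1}{\te^x-1}-\frac{2}{\te^{2x}-1}$ combined with \eqref{Bernoulli-Gen-Eq} at $z=x$ and $z=2x$ does give $\frac{x}{\te^x+1}=\sum_{n=0}^{\infty}(1-2^n)B_n\frac{x^n}{n!}$, the $n=0$ term vanishes, the $n=1$ term contributes the constant $\frac12$ (with $B_1=-\frac12$ as in \eqref{Bernoulli-Gen-Eq}), the odd terms $n\ge3$ drop out, and integration plus the evaluations $\zeta(1-2k)=-\frac{B_{2k}}{2k}$ and $\eta(1-2k)=\bigl(1-2^{2k}\bigr)\zeta(1-2k)$ finish the matching. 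However, your route to the key intermediate expansion $\ln\frac{\te^x+1}{2}=\frac{x}{2}+\sum_{k=1}^{\infty}\bigl(2^{2k}-1\bigr)\frac{B_{2k}}{2k}\frac{x^{2k}}{(2k)!}$ is genuinely different from the paper's: the paper writes $\ln\frac{\te^x+1}{2}=\frac{x}{2}+\ln\cosh\frac{x}{2}=\frac{x}{2}+\ln\cos\frac{x\ti}{2}$ and quotes the tabulated Maclaurin expansion of $\ln\cos x$ from Gradshteyn--Ryzhik, exactly the consistency check you mention parenthetically at the end. What your argument buys is self-containedness: it needs only the generating function \eqref{Bernoulli-Gen-Eq} and an elementary algebraic identity, avoids the excursion through the complex cosine and the sign bookkeeping with $|B_{2k}|$, and makes the radius $|x|<\pi$ transparent as the distance to the zeros $\pm\pi\ti$ of $\te^x+1$. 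It is also methodologically the same differentiate-then-integrate device the paper itself uses in its second proof of Theorem~\ref{Konwn-exp-results-thm}, so it fits the paper's toolkit; what the paper's route buys in exchange is that it simultaneously records the companion expansion \eqref{log-cosh-ser} of $\ln\cosh x$ as a byproduct.
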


In Section~\ref{exp(x)+1altern-sec}, we will recite an alternative and nicer proof of Theorem~\ref{Helms-variant-ser-thm} at the site \url{https://math.stackexchange.com/a/4940352/} (accessed on 16 March 2025). Moreover, in view of~\eqref{Sitnik-Bourbaki-reform} and~\eqref{Cahill-Narayan-Fibonacci-2004-Thm}, we will deduce three determinantal expressions and two recursive relations of the Bernoulli numbers $B_{2k}$, respectively.
\par
At the web site \url{https://math.stackexchange.com/a/5045900} (accessed on 15 March 2025), Qi confirmed the expansion~\eqref{Helms-2ndP} by

\begin{thm}\label{Konwn-exp-results-thm}
For $|x|<2\pi$, we have
\begin{equation}\label{Konwn-exp-results}
\ln\frac{\te^x-1}{x}=\frac{x}{2}-\sum_{k=1}^{\infty}\zeta(1-2k)\frac{x^{2k}}{(2k)!}.
\end{equation}
\end{thm}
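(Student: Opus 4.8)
The plan is to deduce Theorem~\ref{Konwn-exp-results-thm} from the already-proved Theorem~\ref{Helms-variant-ser-thm} by means of the elementary factorization $(\te^x-1)(\te^x+1)=\te^{2x}-1$. Dividing both sides by $2x$ and taking logarithms yields, for $x$ near $0$,
\begin{equation*}
\ln\frac{\te^{2x}-1}{2x}=\ln\frac{\te^x-1}{x}+\ln\frac{\te^x+1}{2},
\end{equation*}
an identity among functions that are analytic and zero-free in a neighbourhood of the origin.

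First I would establish two structural facts about $g(x):=\ln\frac{\te^x-1}{x}$. Since $\te^x-1$ has its zeros only at the points $2\pi\ti n$ with $n\ne0$, the quotient $\frac{\te^x-1}{x}$ is analytic and non-vanishing on the disc $|x|<2\pi$; hence $g$ is analytic there and $g(0)=0$. Moreover, from $\frac{1-\te^{-x}}{x}=\te^{-x}\,\frac{\te^x-1}{x}$ one obtains $g(-x)=g(x)-x$, so that $g(x)-\frac{x}{2}$ is an \emph{even} analytic function on $|x|<2\pi$. Consequently there are constants $c_k$ for which
\begin{equation*}
\ln\frac{\te^x-1}{x}=\frac{x}{2}+\sum_{k=1}^{\infty}c_k\frac{x^{2k}}{(2k)!},\qquad |x|<2\pi.
\end{equation*}

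Next I would feed this expansion, together with the expansion~\eqref{Helms-variant-ser} from Theorem~\ref{Helms-variant-ser-thm}, into the logarithmic identity above: replacing $x$ by $2x$ in the series for $g$ produces the left-hand side, the linear terms agree trivially, and comparing the coefficient of $\frac{x^{2k}}{(2k)!}$ gives $2^{2k}c_k=c_k+\eta(1-2k)$, i.e.\ $(2^{2k}-1)c_k=\eta(1-2k)$. Finally, taking $z=1-2k$ in the relation~\eqref{eta-zeta-equal} yields $\eta(1-2k)=\bigl(1-2^{2k}\bigr)\zeta(1-2k)=-(2^{2k}-1)\zeta(1-2k)$, and since $2^{2k}-1\ne0$ we conclude $c_k=-\zeta(1-2k)$, which is precisely~\eqref{Konwn-exp-results}.

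The calculations are routine; the only point needing care is the domain bookkeeping, namely that the logarithmic identity and all the rearrangements are legitimate on the common disc where every term is analytic, while the resulting power series for $g$ is then automatically valid on the full disc $|x|<2\pi$, its true disc of convergence. For completeness I would also note a self-contained alternative bypassing Theorem~\ref{Helms-variant-ser-thm}: differentiating $g$ and using $\frac{1}{\te^x-1}=\sum_{n\ge0}B_n\frac{x^{n-1}}{n!}$, which follows from~\eqref{Bernoulli-Gen-Eq}, one finds $g'(x)=\frac12+\sum_{k\ge1}B_{2k}\frac{x^{2k-1}}{(2k)!}$; integrating from $0$ and invoking the classical evaluation $\zeta(1-2k)=-\frac{B_{2k}}{2k}$ again yields~\eqref{Konwn-exp-results}.
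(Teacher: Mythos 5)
Your main argument is correct and takes a genuinely different route from either of the paper's two proofs. The paper's first proof rewrites $\ln\frac{\te^x-1}{x}$ as $\frac{x}{2}+\ln\frac{\sin(x\ti/2)}{x\ti/2}$ and quotes the tabulated expansion of $\ln\frac{\sin z}{z}$; its second proof differentiates, inserts the generating function~\eqref{Bernoulli-Gen-Eq}, and integrates; both then invoke $\zeta(1-2k)=-\frac{B_{2k}}{2k}$. You instead exploit the factorization $(\te^x-1)(\te^x+1)=\te^{2x}-1$ to get the duplication identity $g(2x)=g(x)+\ln\frac{\te^x+1}{2}$ for $g(x)=\ln\frac{\te^x-1}{x}$, determine the parity of $g(x)-\frac{x}{2}$ from $g(-x)=g(x)-x$, and solve $(2^{2k}-1)c_k=\eta(1-2k)$ for the unknown even coefficients. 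What this buys: Theorem~\ref{Konwn-exp-results-thm} is deduced from Theorem~\ref{Helms-variant-ser-thm} using only the eta--zeta relation~\eqref{eta-zeta-equal}, with no appeal to Bernoulli numbers or to~\eqref{abram-23.2.15}, and it makes explicit that the two theorems are equivalent modulo the duplication formula. The cost is the dependence on Theorem~\ref{Helms-variant-ser-thm} and some care with logarithm branches and radii, which you handle correctly: all logarithms are the branches vanishing at $0$ on a zero-free disc, so additivity follows by continuation from small real $x$; the coefficient comparison only requires $|x|<\pi$, while the series for $g$ then converges on all of $|x|<2\pi$ by analyticity. Your closing ``self-contained alternative'' is essentially the paper's second proof verbatim.
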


In Section~\ref{Sec-exp(x)+1Bern}, we will recite the nice proof of Theorem~\ref{Konwn-exp-results-thm} at the site \url{https://math.stackexchange.com/a/5045900} (accessed on 15 March 2025), present an alternative series expansion of the function $\ln\frac{\te^x-1}{x}$ in terms of the Stirling numbers of the second kind $S(n,k)$ by virtue of the Fa\`a di Bruno formula~\eqref{Bruno-Bell-Polynomial} and the combinatorial identity~\eqref{B-S-frac-value}, and then derive an closed-form formula of the Bernoulli numbers $B_{2k}$ and a combinatorial identity involving the Stirling numbers of the second kind $S(n,k)$. In addition, in view of~\eqref{Sitnik-Bourbaki-reform} and~\eqref{Cahill-Narayan-Fibonacci-2004-Thm}, we will deduce a determinantal expression and a recursive relation of the Bernoulli numbers $B_{2k}$.
\par
At the web site \url{https://math.stackexchange.com/q/442620} (accessed on 16 March 2025), the series expansion of the function $\sqrt{\ln(1+x)}\,$ at $x=0$ was asked for. In Section~\ref{q-442620-sec}, we will recite a series expansion of the function $\sqrt{\frac{\ln(1+x)}{x}}\,$ and its proof at \url{https://math.stackexchange.com/a/4657078} (accessed on 16 March 2025), establish a series expansion of the function $\bigl[\frac{\ln(1+x)}{x}\bigr]^r$ for $r\in\mathbb{R}$, derive a recursive relation of the Stirling numbers of the first kind $s(n,k)$ and a closed-form formula for the Bernoulli numbers of the second kind $b_n$, which can be generated~\cite{Filomat-36-73-1.tex, Bernoulli2nd-Property.tex, 2rdBern-Det-Zhao.tex} by
\begin{equation}\label{bernoulli-second-dfn}
\frac{x}{\ln(1+x)}=\sum_{n=0}^\infty b_nx^n, \quad |x|<1.
\end{equation}
\par
At the site \url{https://math.stackexchange.com/q/413492} (accessed on 16 March 2025), the series expansion of $\frac{1}{\sqrt{\te^x-1}\,}$ at $x=0$ was asked for. In Section~\ref{sec-exp-Bern-2ndStirl}, we will present a Maclaurin series expansion of the function $\sqrt{\frac{x}{\te^x-1}}\,$ by virtue of the Fa\`a di Bruno formula~\eqref{Bruno-Bell-Polynomial} and the identity~\eqref{B-S-frac-value} in terms of the Stirling numbers of the second kind $S(n,k)$, recite a series expansion of the function $\sqrt{\frac{x}{\te^x-1}}\,$ and its proof at the site \url{https://math.stackexchange.com/a/4657245} (accessed on 15 March 2025) in terms of the Stirling numbers of the first and second kinds, establish a series expansion of the function $\bigl(\frac{\te^x-1}{x}\bigr)^r$ for $r\in\mathbb{R}$, derive three combinatorial identities involving the Stirling numbers of the first and second kinds $s(n,k)$ and $S(n,k)$, deduce closed-form formulas of the Bernoulli numbers $B_{2n}$ and the generalized Bernoulli numbers $B_{n}^{(r)}$ for $r\in\mathbb{R}$.
\par
In Section~\ref{connection-sec}, we will discover two identities connecting the Stirling numbers of the first and second kinds $s(n,k)$ and $S(n,k)$.
\par
We emphasize that the generating functions
\begin{equation*}
\biggl[\frac{\ln(1+x)}{x}\biggr]^r \quad\text{and}\quad \biggl(\frac{\te^x-1}{x}\biggr)^r
\end{equation*}
for $r\in\mathbb{R}$ unify the generating functions of the Stirling numbers of the first and second kinds $s(n,k)$ and $S(n,k)$, the Bernoulli numbers $B_n$, the Bernoulli numbers of the second kind $b_n$, and the generalized Bernoulli numbers $B_n^{(r)}$, respectively. Therefore, the investigation of this paper is of much significance in combinatorial number theory.

\section{An alternative proof of Theorem~\ref{Helms-variant-ser-thm} and three determinantal expressions and recursive relations of Bernoulli numbers}\label{exp(x)+1altern-sec}

In the paper~\cite{CMP6528.tex}, there were three proofs of Theorem~\ref{Helms-variant-ser-thm}. In this section, we recite an alternative and nicer proof of Theorem~\ref{Helms-variant-ser-thm} as follows. Moreover, in view of~\eqref{Sitnik-Bourbaki-reform} and~\eqref{Cahill-Narayan-Fibonacci-2004-Thm}, we deduce three determinantal expressions and recursive relations of the Bernoulli numbers $B_{2k}$, respectively.

\begin{proof}[An alternative proof of Theorem~\ref{Helms-variant-ser-thm}]
This nicer proof is a slightly revised version of the answer at \url{https://math.stackexchange.com/a/4940352/} (accessed on 16 March 2025).
\par
In~\cite[p.~55]{Gradshteyn-Ryzhik-Table-8th}, we find the Maclaurin power series expansion
\begin{equation}\label{log-cosine-series-expansion}
\ln\cos x=-\sum_{k=1}^{\infty}2^{2k}\bigl(2^{2k}-1\bigr)\frac{|B_{2k}|}{2k}\frac{x^{2k}}{(2k)!},\quad |x|<\frac{\pi}2.
\end{equation}
By the expansion~\eqref{log-cosine-series-expansion}, it is not difficult to see that
\begin{align}
\ln\frac{\te^x+1}{2}&=\frac{x}{2}+\ln\frac{\te^{x/2}+\te^{-x/2}}{2}\notag\\
&=\frac{x}{2}+\ln\cosh\frac{x}{2}\notag\\
&=\frac{x}{2}+\ln\cos\frac{x\ti}{2}\notag\\
&=\frac{x}{2}+\sum_{k=1}^{\infty}\bigl(2^{2k}-1\bigr)\frac{B_{2k}}{2k}\frac{x^{2k}}{(2k)!} \label{thm1proof-derivation}\\
&=\frac{x}{2}-\sum_{k=1}^{\infty}\bigl(2^{2k}-1\bigr)\zeta(1-2k)\frac{x^{2k}}{(2k)!}\notag\\
&=\frac{x}{2}+\sum_{k=1}^{\infty}\eta(1-2k)\frac{x^{2k}}{(2k)!}\notag
\end{align}
for $|x|<\pi$, where $\ti=\sqrt{-1}\,$ is the imaginary unit in complex analysis and we used the identity
\begin{equation}\label{abram-23.2.15}
\zeta(1-2k)=-\frac{B_{2k}}{2k}, \quad k\in\mathbb{N},
\end{equation}
found in~\cite[p.~807, Entries~23.2.14 and~23.2.15]{abram}, and the equality~\eqref{eta-zeta-equal}. The series expansion~\eqref{Helms-variant-ser} in Theorem~\ref{Helms-variant-ser-thm} is thus complete.
\end{proof}

\begin{rem}
From the above proof of Theorem~\ref{Helms-variant-ser-thm}, it follows that
\begin{equation}\label{log-cosh-ser}
\ln\cosh x=\sum_{k=1}^{\infty}\bigl(2^{2k}-1\bigr)2^{2k}\frac{B_{2k}}{2k}\frac{x^{2k}}{(2k)!}, \quad |x|<\pi.
\end{equation}
This means that
\begin{equation*}
\bigl(2^{2k}-1\bigr)2^{2k}\frac{B_{2k}}{2k}=\lim_{x\to0}(\ln\cosh x)^{(2k)}=\lim_{x\to0}\biggl(\frac{\sinh x}{\cosh x}\biggr)^{(2k-1)}
\end{equation*}
and $\lim_{x\to0}(\ln\cosh x)^{(2k-1)}=0$ for $k\in\mathbb{N}$. In light of~\eqref{Sitnik-Bourbaki-reform}, we deduce
\begin{equation}\label{Bernoulli-Determin-One}
B_{2k}
=-\frac{k}{\bigl(2^{2k}-1\bigr)2^{2k-1}}
\begin{vmatrix}
0 & 1 & 0 & 0&0&\dotsm & 0 & 0\\
1 & 0 & 1 & 0&0&\dotsm& 0 & 0\\
0 & 1 & 0 & 1&0&\dotsm & 0 &0\\
1 & 0 & \binom{3}{1} & 0&1&\dotsm & 0 &0\\
\vdots & \vdots & \vdots & \vdots & \vdots& \ddots & \vdots & \vdots\\
1 & 0 & \binom{2k-5}{1} & 0&\binom{2k-5}{3}&\dotsm & 0 & 0\\
0 & 1 & 0 &\binom{2k-4}{2} &0&\dotsm & 0 & 0\\
1 & 0 & \binom{2k-3}{1} & 0 &\binom{2k-3}{3}&\dotsm & 1 & 0\\
0 & 1 & 0 & \binom{2k-2}{2}&0&\dotsm & 0 & 1\\
1 & 0 & \binom{2k-1}{1} & 0&\binom{2k-1}{3}&\dotsm & \binom{2k-1}{2k-3} & 0
\end{vmatrix}_{(2k)\times(2k)}
\end{equation}
for $k\in\mathbb{N}_0$.
Furthermore, in view of~\eqref{Cahill-Narayan-Fibonacci-2004-Thm}, we conclude
\begin{equation}\label{Bernou-REcurs-Relat-Eq}
B_{2k}=\frac{k}{\bigl(2^{2k}-1\bigr)2^{2k-1}}\Biggl[1-\sum_{\ell=1}^{k-1} \binom{2k-1}{2\ell-1} \bigl(2^{2\ell}-1\bigr)2^{2\ell} \frac{B_{2\ell}}{2\ell}\Biggr]
\end{equation}
for $k\in\mathbb{N}$.
\end{rem}

\begin{rem}
The series expansion~\eqref{log-cosine-series-expansion} means that
\begin{equation*}
-2^{2k}\bigl(2^{2k}-1\bigr)\frac{|B_{2k}|}{2k}=\lim_{x\to0}(\ln\cos x)^{(2k)}=-\lim_{x\to0}\biggl(\frac{\sin x}{\cos x}\biggr)^{(2k-1)}
\end{equation*}
and $\lim_{x\to0}(\ln\cos x)^{(2k-1)}=0$ for $k\in\mathbb{N}$.
By the derivative formula~\eqref{Sitnik-Bourbaki-reform}, we obtain a determinantal expression
\begin{multline}\label{Bernoulli-Determin-two}
B_{2k}=(-1)^{k+1}\frac{2k}{2^{2k}\bigl(2^{2k}-1\bigr)}\lim_{x\to0}\biggl(\frac{\sin x}{\cos x}\biggr)^{(2k-1)}\\
=\frac{(-1)^{k}k}{2^{2k-1}\bigl(2^{2k}-1\bigr)}
\begin{vmatrix}
0 & 1 & 0 &\dotsm & 0 & 0\\
1 & 0 & 1 &\dotsm& 0 & 0\\
0 & -1 & 0 &\dotsm & 0 &0\\
-1 & 0 & -\binom{3}{1} &\dotsm & 0 &0\\
\vdots & \vdots & \vdots & \ddots & \vdots & \vdots\\
\sin\frac{(2k-3)\pi}{2} & \cos\frac{(2k-3)\pi}{2} & \binom{2k-3}{1}\cos\frac{(2k-4)\pi}{2} & \dotsm & 1 & 0\\
\sin\frac{(2k-2)\pi}{2} & \cos\frac{(2k-2)\pi}{2} & \binom{2k-2}{1}\cos\frac{(2k-3)\pi}{2} & \dotsm & 0 & 1\\
\sin\frac{(2k-1)\pi}{2} & \cos\frac{(2k-1)\pi}{2} & \binom{2k-1}{1}\cos\frac{(2k-2)\pi}{2} & \dotsm & -\binom{2k-1}{2k-3} & 0
\end{vmatrix}_{(2k)\times(2k)}
\end{multline}
for $k\in\mathbb{N}$.
By virtue of the recursive relation~\eqref{Cahill-Narayan-Fibonacci-2004-Thm}, we derive a recursive relation~\eqref{Bernou-REcurs-Relat-Eq} again.
\end{rem}

\begin{rem}
From the derivation in~\eqref{thm1proof-derivation}, it follows that
\begin{equation}\label{exp+1-ser-log}
\ln\frac{\te^x+1}{2}
=\frac{x}{2}+\sum_{k=1}^{\infty}\bigl(2^{2k}-1\bigr)\frac{B_{2k}}{2k}\frac{x^{2k}}{(2k)!}, \quad |x|<\pi.
\end{equation}
This means that
\begin{equation*}
\bigl(2^{2k}-1\bigr)\frac{B_{2k}}{2k}=\lim_{x\to0}\biggl(\ln\frac{\te^x+1}{2}\biggr)^{(2k)}
=\lim_{x\to0}\biggl(\frac{\te^x}{\te^x+1}\biggr)^{(2k-1)}
\end{equation*}
for $k\in\mathbb{N}$. By the derivative formula~\eqref{Sitnik-Bourbaki-reform}, we obtain a determinantal expression
\begin{equation}\label{Bernoulli-Determin-3}
B_{2k}
=-\frac{k}{2^{2k-1}\bigl(2^{2k}-1\bigr)}
\begin{vmatrix}
1 & 2 & 0 &0&\dotsm & 0 & 0\\
1 & 1 & 2 &0&\dotsm& 0 & 0\\
1 & 1 & \binom{2}{1} &2&\dotsm & 0 &0\\
1 & 1 & \binom{3}{1} &\binom{3}{2}&\dotsm & 0 &0\\
\vdots & \vdots & \vdots &  \vdots &\ddots & \vdots & \vdots\\
1 & 1 & \binom{2k-3}{1} & \binom{2k-3}{2}&\dotsm & 2 & 0\\
1 & 1 & \binom{2k-2}{1} & \binom{2k-2}{2}&\dotsm & \binom{2k-2}{2k-3} & 2\\
1 & 1 & \binom{2k-1}{1} & \binom{2k-1}{2}&\dotsm & \binom{2k-1}{2k-3} & \binom{2k-1}{2k-2}
\end{vmatrix}_{(2k)\times(2k)}
\end{equation}
for $k\in\mathbb{N}$. By virtue of the recursive relation~\eqref{Cahill-Narayan-Fibonacci-2004-Thm}, we derive a recursive relation
\begin{equation}\label{Bernoulli-Recursive-2}
B_{2k}=\frac{k}{2(2^{2k}-1)}\Biggl[1-\sum_{j=1}^{k-1}\binom{2k-1}{2j-1} \frac{2^{2j}-1}{j}B_{2j}\Biggr]
\end{equation}
for $k\in\mathbb{N}$.
\end{rem}

\section{Proofs of Theorem~\ref{Konwn-exp-results-thm} and explicit formulas of Bernoulli numbers}\label{Sec-exp(x)+1Bern}

In this section, we recite the proof of Theorem~\ref{Konwn-exp-results-thm} at the site \url{https://math.stackexchange.com/a/5045900} (accessed on 15 March 2025), present an alternative proof of Theorem~\ref{Konwn-exp-results-thm}, establish an alternative Maclaurin expansion of the logarithmic function $\ln\frac{\te^x-1}{x}$ by virtue of the Fa\`a di Bruno formula~\eqref{Bruno-Bell-Polynomial} and the identity~\eqref{B-S-frac-value}, derive an explicit formula, a determinantal expression, and a recursive relation of the Bernoulli numbers $B_{2k}$, and deduce a combinatorial identity for the Stirling numbers of the second kind $S(n,k)$.

\subsection{Two proofs of Theorem~\ref{Konwn-exp-results-thm}}
In this subsection, we provides two nice proofs of Theorem~\ref{Konwn-exp-results-thm}.

\begin{proof}[First proof of Theorem~\ref{Konwn-exp-results-thm}]
This proof is a slightly revised version of the nice answer at \url{https://math.stackexchange.com/a/5045900} (accessed on 15 March 2025).
\par
It is not difficult to see that
\begin{equation*}
\ln\frac{\te^x-1}{x}=\frac{x}{2}+\ln\frac{\sinh(x/2)}{x/2}
=\frac{x}{2}+\ln\frac{\sin(x\ti/2)}{x\ti/2},
\end{equation*}
where $\ti=\sqrt{-1}\,$ is the imaginary unit in complex analysis.
In~\cite[p.~75, Entry~4.3.71]{abram} and~\cite[p.~55]{Gradshteyn-Ryzhik-Table-8th}, we find the Maclaurin expansion
\begin{equation*}
\ln\frac{\sin z}{z}=-\sum_{k=1}^{\infty}\frac{2^{2k-1}}{k}|B_{2k}|\frac{z^{2k}}{(2k)!}, \quad |z|<\pi.
\end{equation*}
Hence, it follows that
\begin{equation*}
\ln\frac{\sin(x\ti/2)}{x\ti/2} =\sum_{k=1}^{\infty}\frac{B_{2k}}{2k}\frac{x^{2k}}{(2k)!}, \quad |x|<2\pi.
\end{equation*}
Accordingly, we derive
\begin{align*}
\ln\frac{\te^x-1}{x}&=\frac{x}{2}+\sum_{k=1}^{\infty}\frac{B_{2k}}{2k}\frac{x^{2k}}{(2k)!}\\
&=\frac{x}{2}-\sum_{k=1}^{\infty}\zeta(1-2k)\frac{x^{2k}}{(2k)!}, \quad |x|<2\pi,\notag
\end{align*}
where we used the identity~\eqref{abram-23.2.15}.
The first proof of the expansion~\eqref{Konwn-exp-results} in Theorem~\ref{Konwn-exp-results-thm} is thus complete.
\end{proof}

\begin{proof}[Second proof of Theorem~\ref{Konwn-exp-results-thm}]
The idea of this proof comes from~\cite[Section~1.1]{log-exp-expan-Sym.tex} and~\cite[Section~3.3]{CMP6528.tex}. This proof was announced at the web site \url{https://math.stackexchange.com/a/5050118/} (accessed on 27 March 2025).
\par
A simple differentiation yields
\begin{align*}
\biggl(\ln\frac{\te^x-1}{x}\biggr)'&=1-\frac{1}{x}+\frac{1}{x}\frac{x}{\te^x-1}\\
&=1-\frac{1}{x}+\frac{1}{x}\Biggl[1-\frac{x}2+\sum_{j=1}^\infty B_{2j}\frac{x^{2j}}{(2j)!}\Biggr]\\
&=\frac{1}{2}+\sum_{j=1}^\infty B_{2j}\frac{x^{2j-1}}{(2j)!}, \quad |x|<2\pi,
\end{align*}
where we used the Maclaurin power series expansion~\eqref{Bernoulli-Gen-Eq}. Integrating over the interval $(0,x)$ on both sides of the above equality at the very ends yields
\begin{equation}\label{F1(x)-ser-expan}
\ln\frac{\te^x-1}{x}=\frac{x}{2}+\sum_{j=1}^\infty\frac{B_{2j}}{2j}\frac{x^{2j}}{(2j)!}, \quad |x|<2\pi.
\end{equation}
Substituting~\eqref{abram-23.2.15} into~\eqref{F1(x)-ser-expan} leads to~\eqref{Konwn-exp-results}.
The second proof of Theorem~\ref{Konwn-exp-results-thm} is thus complete.
\end{proof}

\begin{rem}
From the above proof of Theorem~\ref{Konwn-exp-results-thm}, it follows that
\begin{equation}\label{log-sinh-x-ser}
\ln\frac{\sinh x}{x}
=\sum_{k=1}^{\infty}2^{2k}\frac{B_{2k}}{2k}\frac{x^{2k}}{(2k)!}, \quad |x|<\pi.
\end{equation}
\end{rem}

\begin{rem}
The series expansion~\eqref{F1(x)-ser-expan} means that
\begin{align*}
\frac{B_{2k}}{2k}&=\lim_{x\to0}\biggl(\ln\frac{\te^x-1}{x}\biggr)^{(2k)}\\
&=\lim_{x\to0}\biggl(\ln\int_0^1\te^{xv}\td v\biggr)^{(2k)}\\
&=\lim_{x\to0}\Biggl[\frac{\int_0^1v\te^{xv}\td v}{\int_0^1\te^{xv}\td v}\Biggr]^{(2k-1)}
\end{align*}
and $\lim_{x\to0}\bigl(\ln\frac{\te^x-1}{x}\bigr)^{(2k+1)}=0$ for $k\in\mathbb{N}$. By virtue of the derivative formula~\eqref{Sitnik-Bourbaki-reform}, we acquire the determinantal expression
\begin{equation}\label{Bernoulli-Determin-4}
B_{2k}
=-2k\begin{vmatrix}
\frac{1}{2} & 1 & 0 &\dotsm & 0 & 0\\
\frac{1}{3} & \frac{1}{2} & 1 &\dotsm& 0 & 0\\
\frac{1}{4} & \frac{1}{3} & \frac{1}{2}\binom{2}{1} &\dotsm & 0 &0\\
\vdots & \vdots & \vdots & \ddots & \vdots & \vdots\\
\frac{1}{2k-1} & \frac{1}{2k-2} & \frac{1}{2k-3}\binom{2k-3}{1} & \dotsm & 1 & 0\\
\frac{1}{2k} & \frac{1}{2k-1} & \frac{1}{2k-2}\binom{2k-2}{1} & \dotsm & \frac{1}{2}\binom{2k-2}{2k-3} & 1\\
\frac{1}{2k+1} & \frac{1}{2k} & \frac{1}{2k-1}\binom{2k-1}{1} & \dotsm & \frac{1}{3}\binom{2k-1}{2k-3} & \frac{1}{2}\binom{2k-1}{2k-2}
\end{vmatrix}, \quad k\in\mathbb{N}.
\end{equation}
Making use of the recursive relation~\eqref{Cahill-Narayan-Fibonacci-2004-Thm} leads to the recursive relation
\begin{equation}\label{Bernoulli-Recursive-T3}
B_{2k}=2k\Biggl[\frac{2k-1}{4k(2k+1)}-\sum_{\ell=1}^{k-1}\frac{1}{2k-2\ell+1} \binom{2k-1}{2\ell-1} \frac{B_{2\ell}}{2\ell}\Biggr], \quad k\in\mathbb{N}.
\end{equation}
\end{rem}

\subsection{An explicit and closed-form formula of Bernoulli numbers}

Theorem~4 in~\cite{CMP6528.tex} reads that
\begin{equation}\label{Helms-2nd-Stirling-Ser}
\ln\frac{\te^x+1}{2}=\sum_{k=1}^\infty \Biggl[\sum_{j=1}^k(-1)^{j-1}\frac{(j-1)!}{2^{j}}S(k,j)\Biggr]\frac{x^k}{k!},\quad |x|<\pi.
\end{equation}
Comparing~\eqref{Helms-2nd-Stirling-Ser} with~\eqref{Helms-variant-ser} and employing~\eqref{eta-zeta-equal} and~\eqref{abram-23.2.15} yield~\cite[Remark~2]{CMP6528.tex} the closed-form formula
\begin{equation}\label{Helms-variant-ser27}
B_{2k}=\frac{2k}{2^{2k}-1} \sum_{j=1}^{2k}(-1)^{j-1}\frac{(j-1)!}{2^{j}}S(2k,j), \quad k\in\mathbb{N}
\end{equation}
and the combinatorial identity
\begin{equation}\label{Helms-variant=0}
\sum_{j=1}^{2k+1}\frac{(-1)^{j}}{2^{j}}(j-1)!S(2k+1,j)=0, \quad k\in\mathbb{N}.
\end{equation}
\par
In the papers~\cite{v2Bernoulli-ID-Stack.tex, RMJ-7006.tex, Genocchi-Stirling.tex, ANLY-D-12-1238.tex, Guo-Qi-JANT-Bernoulli.tex, exp-derivative-sum-Combined.tex, 2Closed-Bern-Polyn2.tex, Bernoulli-Stirling-4P.tex, mathematics-131192.tex} and at the sites
\url{https://math.stackexchange.com/a/4256913/} (accessed on 26 March 2025) and
\url{https://math.stackexchange.com/a/4256915/} (accessed on 26 March 2025),
many closed-form formulas of the Bernoulli numbers and polynomials $B_n$ and $B_n(t)$ in terms of central factorial numbers of the second kind, the Stirling numbers of the second kind, and determinants were collected, reviewed, surveyed, rediscovered, and established.
\par
Motivated by the above ideas and approaches of deriving~\eqref{Helms-2nd-Stirling-Ser}, \eqref{Helms-variant-ser27}, and~\eqref{Helms-variant=0}, we present the following theorem.

\begin{thm}\label{Bell-exp-results-thm}
For $|x|<2\pi$, we have
\begin{equation}\label{Bell-exp-results}
\ln\frac{\te^x-1}{x}=-\sum_{k=1}^{\infty}\Biggl[\frac{1}{\binom{2k}{k}} \sum_{\ell=1}^k\frac{(-1)^{\ell}}{\ell}\binom{2k}{k+\ell}S(k+\ell,\ell)\Biggr]\frac{x^k}{k!}.
\end{equation}
For $k\in\mathbb{N}$, we have
\begin{equation}\label{Equal=0-Stirl-Bern}
B_{2k}=\frac{2k}{\binom{4k}{2k}} \sum_{\ell=1}^{2k}\frac{(-1)^{\ell}}{\ell}\binom{4k}{2k+\ell}S(2k+\ell,\ell)
\end{equation}
and
\begin{equation}\label{Equal=0-Stirl}
\sum_{\ell=1}^{2k+1}\frac{(-1)^{\ell}}{\ell}\binom{4k+2}{2k+\ell+1}S(2k+\ell+1,\ell)=0.
\end{equation}
\end{thm}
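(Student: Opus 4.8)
The plan is to mirror the derivation of the expansion~\eqref{Helms-2nd-Stirling-Ser} and of the formulas~\eqref{Helms-variant-ser27} and~\eqref{Helms-variant=0}, with the closed-form evaluation~\eqref{B-S-frac-value} taking over the role played there by~\eqref{Bell-stirling} and~\eqref{Bell(n-k)}, and then to compare the outcome with the already established expansion~\eqref{F1(x)-ser-expan} (equivalently~\eqref{Konwn-exp-results}).

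First I would write $h(x)=\frac{\te^x-1}{x}=\sum_{m=0}^{\infty}\frac{x^m}{(m+1)!}$, so that $h$ is entire with $h(0)=1$ and $h^{(j)}(0)=\frac{j!}{(j+1)!}=\frac1{j+1}$ for $j\in\mathbb{N}_0$. Since the zeros of $\te^x-1$ in $\mathbb{C}\setminus\{0\}$ nearest the origin are $\pm2\pi\ti$, the function $h$ is zero-free on the disk $|x|<2\pi$, so $\ln\frac{\te^x-1}{x}=\ln h(x)$ has there a single-valued analytic branch vanishing at $x=0$; this explains the stated radius of convergence and legitimizes term-by-term differentiation. Applying the Fa\`a di Bruno formula~\eqref{Bruno-Bell-Polynomial} with outer function $f=\ln$, and using $\ln1=0$ together with $(\ln)^{(k)}(1)=(-1)^{k-1}(k-1)!$ for $k\ge1$, gives
\begin{equation*}
\biggl(\ln\frac{\te^x-1}{x}\biggr)^{(n)}\bigg|_{x=0}
=\sum_{k=1}^{n}(-1)^{k-1}(k-1)!\,\bell_{n,k}\biggl(\frac12,\frac13,\dotsc,\frac1{n-k+2}\biggr),
\end{equation*}
into which I would substitute the closed form~\eqref{B-S-frac-value}.

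Next I would interchange the order of the resulting double sum over $k$ and $\ell$ (the $\ell=0$ contribution drops out because $S(n,0)=0$ for $n\ge1$), expand $\binom{n+k}{k-\ell}=\frac{(n+k)!}{(k-\ell)!(n+\ell)!}$ to cancel the prefactor $\frac{n!}{(n+k)!}$, and collapse the remaining inner sum over $k$ by the hockey-stick identity, namely
\begin{equation*}
\sum_{k=\ell}^{n}\frac{(k-1)!}{(k-\ell)!}=(\ell-1)!\sum_{j=0}^{n-\ell}\binom{j+\ell-1}{\ell-1}=(\ell-1)!\binom{n}{\ell}=\frac{n!}{\ell\,(n-\ell)!}.
\end{equation*}
Collecting factorials and using $\binom{2n}{n+\ell}/\binom{2n}{n}=\frac{(n!)^2}{(n+\ell)!(n-\ell)!}$ turns the double sum into the single sum of~\eqref{Bell-exp-results}, and dividing by $n!$ reads off the Maclaurin coefficients. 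Finally, equating for each $k$ the coefficient of $\frac{x^k}{k!}$ in~\eqref{Bell-exp-results} with that in~\eqref{F1(x)-ser-expan} yields the remaining assertions: for $k=2n$ the latter coefficient equals $\frac{B_{2n}}{2n}$, which gives~\eqref{Equal=0-Stirl-Bern}, while for odd $k=2n+1\ge3$ it vanishes (no such power occurs in~\eqref{F1(x)-ser-expan}), which gives~\eqref{Equal=0-Stirl}; the case $k=1$ serves as a consistency check, both sides then being $\frac12$.

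The main obstacle is the middle step — converting the Fa\`a di Bruno double sum into a single sum over $\ell$ — but once the order of summation has been swapped, this is only the hockey-stick identity together with routine factorial bookkeeping. The other points demanding a little care are the analyticity and term-by-term differentiation on $|x|<2\pi$, and keeping track of signs in passing from the coefficient of $\frac{x^k}{k!}$ to the displayed formula for $B_{2k}$.
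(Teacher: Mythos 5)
Your argument is essentially the paper's own proof: the paper writes $u(x)=\int_0^1\te^{xv}\td v$ where you use the power series of $\frac{\te^x-1}{x}$ directly (same derivative values $\frac{1}{j+1}$ at the origin), and both then apply the Fa\`a di Bruno formula~\eqref{Bruno-Bell-Polynomial} with outer function $\ln$, insert~\eqref{B-S-frac-value}, swap the order of summation (the $\ell=0$ term vanishing), collapse the inner sum via the hockey-stick identity~\eqref{Identity58Spivey-art-2019}, and finally compare with~\eqref{Konwn-exp-results}/\eqref{F1(x)-ser-expan}. One caveat that applies equally to your write-up and to the paper's: equating the coefficient of $\frac{x^{2k}}{(2k)!}$ actually yields $B_{2k}=-\frac{2k}{\binom{4k}{2k}}\sum_{\ell=1}^{2k}\frac{(-1)^{\ell}}{\ell}\binom{4k}{2k+\ell}S(2k+\ell,\ell)$, i.e.\ \eqref{Equal=0-Stirl-Bern} with an extra minus sign (check $k=1$: the stated right-hand side evaluates to $-\frac16$ while $B_2=\frac16$), so the displayed formula~\eqref{Equal=0-Stirl-Bern} carries a sign typo that neither derivation as described would literally reproduce.
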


\begin{proof}
It is not difficult to verify that
\begin{equation*}
\ln\frac{\te^x-1}{x}=\ln \int_0^1\te^{xv}\td v\triangleq \ln u(x), \quad x\in\mathbb{R}.
\end{equation*}
Then, by virtue of the Fa\`a di Bruno formula~\eqref{Bruno-Bell-Polynomial}, we obtain
\begin{align*}
 \biggl(\ln\frac{\te^x-1}{x}\biggr)^{(k)}
 &=\sum_{j=0}^{k}(\ln u)^{(j)}\bell_{k,j}\bigl(u'(x), u''(x),\dotsc, u^{(k-j+1)}(x)\bigr)\\
 &=\sum_{j=0}^{k}\frac{(-1)^{j-1}(j-1)!}{u^j} \bell_{k,j} \Biggl(\biggl(\int_0^1\te^{xv}\td v\biggr)', \biggl(\int_0^1\te^{xv}\td v\biggr)'',\\
 &\quad \dotsc, \biggl(\int_0^1\te^{xv}\td v\biggr)^{(k-j+1)}\Biggr)\\
 &=\sum_{j=0}^{k}\frac{(-1)^{j-1}(j-1)!}{[u(x)]^j} \bell_{k,j} \biggl(\int_0^1v\te^{xv}\td v,\\
 &\quad \int_0^1v^2\te^{xv}\td v,\dotsc, \int_0^1v^{k-j+1}\te^{xv}\td v\biggr)\\
 &\to\sum_{j=0}^{k} (-1)^{j-1}(j-1)! \bell_{k,j} \biggl(\int_0^1v\td v, \int_0^1v^2\td v,\\
 &\quad \dotsc, \int_0^1v^{k-j+1}\td v\biggr), \quad x\to0\\
 &=\sum_{j=0}^{k} (-1)^{j-1}(j-1)! \bell_{k,j} \biggl(\int_0^1v\td v, \int_0^1v^2\td v,\dotsc, \int_0^1v^{k-j+1}\td v\biggr)\\
 &=\sum_{j=0}^{k} (-1)^{j-1}(j-1)! \bell_{k,j} \biggl(\frac{1}{2}, \frac{1}{3},\dotsc, \frac1{k-j+2}\biggr)\\
 &=\sum_{j=0}^{k} (-1)^{j-1} \frac{(j-1)!k!}{(k+j)!}\sum_{\ell=0}^j(-1)^{j-\ell}\binom{k+j}{j-\ell}S(k+\ell,\ell)\\
 &=-\sum_{j=0}^{k} \frac{1}{j\binom{k+j}{j}}\sum_{\ell=0}^j(-1)^{\ell}\binom{k+j}{j-\ell}S(k+\ell,\ell)\\
 &=-\sum_{j=0}^{k} \frac{1}{j\binom{k+j}{j}}\binom{k+j}{j}S(k,0) \\
 &\quad -\sum_{j=0}^{k} \frac{1}{j\binom{k+j}{j}} \sum_{\ell=1}^j(-1)^{\ell}\binom{k+j}{j-\ell}S(k+\ell,\ell)\\
 &=-\sum_{\ell=1}^k(-1)^{\ell}\Biggl[\sum_{j=\ell}^{k} \frac{1}{j\binom{k+j}{j}}\binom{k+j}{j-\ell}\Biggr] S(k+\ell,\ell)\\
 &=-\sum_{\ell=1}^k(-1)^{\ell}\frac{k!}{(k+\ell)!} \Biggl[\sum_{j=\ell}^{k} \frac{(j-1)!}{(j-\ell)!}\Biggr] S(k+\ell,\ell)\\
 &=-\sum_{\ell=1}^k(-1)^{\ell}\frac{k!}{(k+\ell)!} \frac{k!}{\ell (k-\ell)!} S(k+\ell,\ell)\\
 &=-(k!)^2\sum_{\ell=1}^k\frac{(-1)^{\ell}}{\ell}\frac{1}{(k+\ell)!} \frac{1}{(k-\ell)!} S(k+\ell,\ell)
\end{align*}
for $k\in\mathbb{N}$, where we used the identity~\eqref{B-S-frac-value} and the identity
\begin{equation*}
\sum_{j=\ell}^{k} \frac{(j-1)!}{(j-\ell)!}
=(\ell-1)!\sum_{j=\ell}^{k} \binom{j-1}{\ell-1}
=(\ell-1)!\binom{k}{\ell}
=\frac{k!}{\ell (k-\ell)!}
\end{equation*}
for $\ell\in\mathbb{N}$, deduced by virtue of the identity
\begin{equation}\label{Identity58Spivey-art-2019}
\sum_{k=0}^n\binom{k}{m}=\sum_{k=m}^n\binom{k}{m}=\binom{n+1}{m+1}, \quad n\ge m\in\mathbb{N}_0
\end{equation}
which can be found as Identity~58 in~\cite{Spivey-art-2019}.
Therefore, we conclude the Maclaurin series expansion~\eqref{Bell-exp-results}.
\par
Comparing between~\eqref{Bell-exp-results} and~\eqref{Konwn-exp-results} and considering the relation~\eqref{abram-23.2.15} result in~\eqref{Equal=0-Stirl-Bern} and~\eqref{Equal=0-Stirl} straightforwardly.
The proof of Theorem~\ref{Bell-exp-results-thm} is complete.
\end{proof}

\section{On generating functions of Stirling numbers of first kind}\label{q-442620-sec}

As the most perfect answer to the question at \url{https://math.stackexchange.com/q/442620} (accessed on 16 March 2025), the following theorem was established.

\begin{thm}\label{Sqrt-log-thm}
For $|x|<1$, we have the series expansion
\begin{equation}\label{Sqrt-log-Eq}
\sqrt{\frac{\ln(1+x)}{x}}\,=-\sum_{n=0}^\infty\Biggl[\sum_{k=0}^n\frac{(2k-3)!!}{(2k)!!} \sum_{m=0}^k(-1)^m\binom{k}{m}\frac{s(n+m,m)}{\binom{n+m}{m}}\Biggr]\frac{x^n}{n!}.
\end{equation}
\end{thm}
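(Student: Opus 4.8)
The plan is to derive \eqref{Sqrt-log-Eq} by applying the Fa\`a di Bruno formula \eqref{Bruno-Bell-Polynomial} to the composite function $\sqrt{u}\circ h$ with $h(x)=\frac{\ln(1+x)}{x}$, and then to identify the partial Bell polynomials that appear by means of the homogeneity relation \eqref{Bell(n-k)} and the closed-form identity \eqref{Bell-Stir1st=eq}. First I would note that $h$ extends to an analytic, zero-free function on the disk $|x|<1$, since there $\ln(1+x)$ vanishes only at $x=0$, where $h$ has the removable value $h(0)=1$; hence $\sqrt{h(x)}$ is analytic on $|x|<1$ and its Maclaurin series converges there. Next, from $\ln(1+x)=\sum_{i\ge1}\frac{(-1)^{i-1}}{i}x^i$ I would read off the Taylor coefficients $h^{(i)}(0)=\frac{(-1)^i\,i!}{i+1}$ for $i\ge0$.

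Applying \eqref{Bruno-Bell-Polynomial} with $f(u)=\sqrt{u}$ and evaluating at $x=0$ then gives
\begin{equation*}
\bigl(\sqrt{h}\,\bigr)^{(n)}(0)=\sum_{k=0}^n\langle 1/2\rangle_k\,\bell_{n,k}\bigl(h'(0),h''(0),\dotsc,h^{(n-k+1)}(0)\bigr),
\end{equation*}
because $f^{(k)}(h(0))=f^{(k)}(1)=\langle 1/2\rangle_k$ in the notation \eqref{Fall-Factorial-Dfn-Eq}, and $\langle 1/2\rangle_k=\frac{(-1)^{k-1}(2k-3)!!}{2^k}$ with the convention $(-1)!!=1$. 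Since the $i$th argument of this Bell polynomial equals $(-1)^i\frac{i!}{i+1}$, the homogeneity relation \eqref{Bell(n-k)}, used with $a=1$ and $b=-1$, turns it into $(-1)^n\bell_{n,k}\bigl(\frac{1!}{2},\frac{2!}{3},\dotsc,\frac{(n-k+1)!}{n-k+2}\bigr)$, which \eqref{Bell-Stir1st=eq} then rewrites through $\frac{s(n+m,m)}{\binom{n+m}{m}}$. Collecting the powers of $-1$ that have accumulated (namely $(-1)^{k-1}$ from $\langle 1/2\rangle_k$, $(-1)^n$ from \eqref{Bell(n-k)}, and $(-1)^{n-k}$ from \eqref{Bell-Stir1st=eq}, whose product is $-1$) and using $2^k k!=(2k)!!$ should collapse everything to
\begin{equation*}
\bigl(\sqrt{h}\,\bigr)^{(n)}(0)=-\sum_{k=0}^n\frac{(2k-3)!!}{(2k)!!}\sum_{m=0}^k(-1)^m\binom{k}{m}\frac{s(n+m,m)}{\binom{n+m}{m}},
\end{equation*}
and dividing by $n!$ and summing over $n\ge0$ would yield \eqref{Sqrt-log-Eq}.

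I do not anticipate a genuine obstacle: the argument is essentially bookkeeping, and the two spots that need care are the sign tally just described and the double-factorial conventions at $k=0,1$, together with the routine justification of term-by-term differentiation inside $|x|<1$. As an even shorter alternative that sidesteps the Bell polynomials, one can set $f(z)=\frac{\ln(1+z)}{z}$, read $C_{\ell,j}=\frac{s(j+\ell,\ell)}{\binom{j+\ell}{\ell}}$ off \eqref{Stirl-No-First-GF}, apply \eqref{Z0-alpha-ser-expan} with $\alpha=\frac12$, use $f(0)=1$ to discard the factors $f^{\alpha-q}(0)$, and simplify $\frac{(-\alpha)_k}{k!}=\frac{(-1/2)_k}{k!}=-\frac{(2k-3)!!}{(2k)!!}$; this reproduces \eqref{Sqrt-log-Eq} directly.
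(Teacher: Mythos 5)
Your proposal is correct and follows essentially the same route as the paper's own proof: apply the Fa\`a di Bruno formula \eqref{Bruno-Bell-Polynomial} to $\sqrt{u}$ composed with $h(x)=\frac{\ln(1+x)}{x}$, use $h^{(i)}(0)=(-1)^i\frac{i!}{i+1}$, pull out the sign via \eqref{Bell(n-k)}, and evaluate the resulting Bell polynomial by \eqref{Bell-Stir1st=eq}; your sign tally and the identification $2^kk!=(2k)!!$ match the paper's computation exactly. The alternative you sketch via \eqref{Z0-alpha-ser-expan} and \eqref{Stirl-No-First-GF} is also sound and is the same mechanism the paper invokes in a later remark for the general power $r$.
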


\begin{proof}
This proof is a slightly revised version of the answer at \url{https://math.stackexchange.com/a/4657078} (accessed on 16 March 2025).
\par
It is well known that
\begin{equation*}
\frac{\ln(1+x)}{x}=\sum_{k=0}^\infty(-1)^k\frac{x^k}{k+1}, \quad |x|<1.
\end{equation*}
This means that
\begin{equation*}
\lim_{x\to0}\biggl[\frac{\ln(1+x)}{x}\biggr]^{(k)}
=(-1)^k\frac{k!}{k+1}, \quad k\ge0.
\end{equation*}
By virtue of the Fa\`a di Bruno formula~\eqref{Bruno-Bell-Polynomial}, we obtain
\begin{align*}
\Biggl[&\sqrt{\frac{\ln(1+x)}{x}}\,\Biggr]^{(n)}
=\sum_{k=0}^n\biggl\langle\frac12\biggr\rangle_k \biggl[\frac{\ln(1+x)}{x}\biggr]^{1/2-k} \\
&\quad\times \bell_{n,k}\Biggl(\biggl[\frac{\ln(1+x)}{x}\biggr]', \biggl[\frac{\ln(1+x)}{x}\biggr]'', \dotsc, \biggl[\frac{\ln(1+x)}{x}\biggr]^{(n-k+1)}\Biggr)\\
&\to\sum_{k=0}^n\biggl\langle\frac12\biggr\rangle_k \bell_{n,k}\biggl(-\frac{1!}{2}, \frac{2!}{3}, \dotsc, (-1)^{n-k+1}\frac{(n-k+1)!}{n-k+2}\biggr), \quad x\to0\\
&=(-1)^n\sum_{k=0}^n(-1)^{k-1}\frac{(2k-3)!!}{2^k} \bell_{n,k}\biggl(\frac{1!}{2}, \frac{2!}{3}, \dotsc, \frac{(n-k+1)!}{n-k+2}\biggr)\\
&=(-1)^n\sum_{k=0}^n(-1)^{k-1}\frac{(2k-3)!!}{2^k} \frac{(-1)^{n-k}}{k!}\sum_{m=0}^k(-1)^m\binom{k}{m}\frac{s(n+m,m)}{\binom{n+m}{m}}\\
&=-\sum_{k=0}^n\frac{(2k-3)!!}{(2k)!!} \sum_{m=0}^k(-1)^m\binom{k}{m}\frac{s(n+m,m)}{\binom{n+m}{m}},
\end{align*}
where we used the formulas~\eqref{Bell-Stir1st=eq} and~\eqref{Bell(n-k)}.
Consequently, we arrive at the Maclaurin series expansion~\eqref{Sqrt-log-Eq}.
The proof of the series expansion~\eqref{Sqrt-log-Eq} in Theorem~\ref{Sqrt-log-thm} is thus complete.
\end{proof}

As a generalization of Theorem~\ref{Sqrt-log-thm} and the equation~\eqref{Stirl-No-First-GF} for generating the Stirling numbers of the first kind $s(n,k)$, the following result is valid.

\begin{thm}\label{real-power-log-thm}
For $|x|<1$ and $r\in\mathbb{R}$, we have the series expansion
\begin{equation}\label{real-power-log-Eq}
\biggl[\frac{\ln(1+x)}{x}\biggr]^r
=\sum_{n=0}^{\infty}\Biggl[\sum_{k=0}^n\frac{(-r)_k}{k!} \sum_{m=0}^k(-1)^m\binom{k}{m}\frac{s(n+m,m)}{\binom{n+m}{m}}\Biggr]\frac{x^n}{n!}.
\end{equation}
\end{thm}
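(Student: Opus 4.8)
The plan is to run the argument of Theorem~\ref{Sqrt-log-thm} with the exponent $\frac12$ replaced by an arbitrary $r\in\mathbb{R}$, and then to cross-check the outcome against the series-raising lemma~\eqref{Z0-alpha-ser-expan}. First I would record, exactly as in that proof, that $\frac{\ln(1+x)}{x}=\sum_{k=0}^{\infty}(-1)^k\frac{x^k}{k+1}$ for $|x|<1$, so that $\lim_{x\to0}\bigl[\frac{\ln(1+x)}{x}\bigr]^{(i)}=(-1)^i\frac{i!}{i+1}$ and $\frac{\ln(1+x)}{x}\big|_{x=0}=1$. Since $\frac{\ln(1+x)}{x}$ is analytic and nonzero at the origin, $\bigl[\frac{\ln(1+x)}{x}\bigr]^r$ (with the principal branch, equal to $1$ at $x=0$) is analytic in a neighbourhood of $0$; the nearest obstruction is the branch point of the logarithm at $x=-1$, so its Maclaurin series converges for $|x|<1$, which is the asserted range.

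Next I would apply the Fa\`a di Bruno formula~\eqref{Bruno-Bell-Polynomial} with outer function $f(t)=t^r$, for which $f^{(k)}(t)=\langle r\rangle_k t^{r-k}$, and inner function $h(x)=\frac{\ln(1+x)}{x}$, obtaining
\begin{equation*}
\Biggl[\biggl(\frac{\ln(1+x)}{x}\biggr)^{r}\Biggr]^{(n)}
=\sum_{k=0}^{n}\langle r\rangle_k\biggl[\frac{\ln(1+x)}{x}\biggr]^{r-k}
\bell_{n,k}\bigl(h'(x),\dotsc,h^{(n-k+1)}(x)\bigr).
\end{equation*}
Letting $x\to0$, the factor $\bigl[\frac{\ln(1+x)}{x}\bigr]^{r-k}$ tends to $1$ while the arguments of $\bell_{n,k}$ tend to $\bigl(-\frac{1!}{2},\frac{2!}{3},\dotsc,(-1)^{n-k+1}\frac{(n-k+1)!}{n-k+2}\bigr)$. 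I would then strip the alternating signs by the homogeneity relation~\eqref{Bell(n-k)} with $a=1$ and $b=-1$, which converts this into $(-1)^{n}\bell_{n,k}\bigl(\frac{1!}{2},\frac{2!}{3},\dotsc,\frac{(n-k+1)!}{n-k+2}\bigr)$, and finally invoke~\eqref{Bell-Stir1st=eq} to rewrite the remaining Bell polynomial as $\frac{(-1)^{n-k}}{k!}\sum_{m=0}^{k}(-1)^m\binom{k}{m}\frac{s(n+m,m)}{\binom{n+m}{m}}$. Collecting constants, $(-1)^n(-1)^{n-k}=(-1)^k$, and since $(-r)_k=(-1)^k\langle r\rangle_k$ by the falling/rising conversion recorded in Section~1, the scalar $\langle r\rangle_k(-1)^k/k!$ collapses to $(-r)_k/k!$; dividing by $n!$ and summing over $n$ then yields~\eqref{real-power-log-Eq}.

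The one point requiring care is the interchange of the limit $x\to0$ with the finite sum and the identification of the resulting numbers as genuine Taylor coefficients; both are immediate from the analyticity established in the first step, since each $\bell_{n,k}$ is a polynomial in the derivatives of $h$ and those are continuous at $0$. As an independent verification, the same formula drops out of~\eqref{Z0-alpha-ser-expan} with essentially no computation: taking $f(z)=\frac{\ln(1+z)}{z}$ gives $f(0)=1$, hence $f^{\,\alpha-q}(0)=1$ for every $q$, and~\eqref{Stirl-No-First-GF} identifies $C_{\ell,j}=\frac{s(j+\ell,\ell)}{\binom{j+\ell}{\ell}}$; then~\eqref{Z0-alpha-ser-expan} with $\alpha=r$ reduces verbatim to~\eqref{real-power-log-Eq}. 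Finally, setting $r=\frac12$ recovers Theorem~\ref{Sqrt-log-thm}, since $(-\frac12)_k/k!=-(2k-3)!!/(2k)!!$, which serves as a reassuring check on the signs.
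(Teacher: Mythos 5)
Your proposal is correct and follows essentially the same route as the paper's own proof: the Fa\`a di Bruno formula~\eqref{Bruno-Bell-Polynomial} applied to the outer power function and inner function $\frac{\ln(1+x)}{x}$, the limit $x\to0$, the homogeneity identity~\eqref{Bell(n-k)} to extract $(-1)^n$, the identity~\eqref{Bell-Stir1st=eq}, and the conversion $(-1)^k\langle r\rangle_k=(-r)_k$. Your added remarks on the radius of convergence, the cross-check via~\eqref{Z0-alpha-ser-expan}, and the specialization to $r=\frac12$ match observations the paper itself makes in the surrounding remarks.
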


\begin{proof}
As done in the proof of Theorem~\ref{Sqrt-log-thm}, using the Fa\`a di Bruno formula~\eqref{Bruno-Bell-Polynomial} and employing the formulas~\eqref{Bell-Stir1st=eq} and~\eqref{Bell(n-k)}, we acquire
\begin{align*}
\biggl(&\biggl[\frac{\ln(1+x)}{x}\biggr]^r\biggr)^{(n)}
=\sum_{k=0}^n\langle r\rangle_k \biggl[\frac{\ln(1+x)}{x}\biggr]^{r-k} \\
&\quad\times \bell_{n,k}\Biggl(\biggl[\frac{\ln(1+x)}{x}\biggr]', \biggl[\frac{\ln(1+x)}{x}\biggr]'', \dotsc, \biggl[\frac{\ln(1+x)}{x}\biggr]^{(n-k+1)}\Biggr)\\
&\to\sum_{k=0}^n\langle r\rangle_k \bell_{n,k}\biggl(-\frac{1!}{2}, \frac{2!}{3}, \dotsc, (-1)^{n-k+1}\frac{(n-k+1)!}{n-k+2}\biggr), \quad x\to0\\
&=(-1)^n\sum_{k=0}^n\langle r\rangle_k \bell_{n,k}\biggl(\frac{1!}{2}, \frac{2!}{3}, \dotsc, \frac{(n-k+1)!}{n-k+2}\biggr)\\
&=(-1)^n\sum_{k=0}^n\langle r\rangle_k \frac{(-1)^{n-k}}{k!}\sum_{m=0}^k(-1)^m\binom{k}{m}\frac{s(n+m,m)}{\binom{n+m}{m}}\\
&=\sum_{k=0}^n(-1)^{k} \frac{\langle r\rangle_k}{k!}\sum_{m=0}^k(-1)^m\binom{k}{m}\frac{s(n+m,m)}{\binom{n+m}{m}}\\
&=\sum_{k=0}^n \frac{(-r)_k}{k!}\sum_{m=0}^k(-1)^m\binom{k}{m}\frac{s(n+m,m)}{\binom{n+m}{m}},\quad n\in\mathbb{N}_0.
\end{align*}
Consequently, the series expansion~\eqref{real-power-log-Eq} in Theorem~\ref{real-power-log-thm} is complete.
\end{proof}

\begin{rem}
Comparing~\eqref{real-power-log-Eq} with~\eqref{Stirl-No-First-GF} results in the combinatorial identity
\begin{equation}\label{diag-1st-stirl-eq}
s(n+r,r)
=\binom{n+r}{r}\sum_{k=0}^n\frac{(-r)_k}{k!} \sum_{m=0}^k(-1)^m\binom{k}{m}\frac{s(n+m,m)}{\binom{n+m}{m}}, \quad n,r\in\mathbb{N}_0.
\end{equation}
This identity is seemingly a generalization of the diagonal recurrence relations discussed in~\cite{1st-Stirl-No-adjust.tex, AADM-2821.tex} for the Stirling numbers of the first kind $s(n,k)$.
\end{rem}

\begin{rem}
The Bernoulli numbers of the second kind $b_n$ can be generated by the equation~\eqref{bernoulli-second-dfn}.
For more information and recent results of the Bernoulli numbers of the second kind $b_n$, please refer to the papers~\cite{Bernoulli-Stirling-Beograd.tex, Filomat-36-73-1.tex, Pyo-Kim-Rim-JNSA.tex, DJK2S-JAA.tex, Bernoulli2nd-Property.tex, 2rdBern-Det-Zhao.tex} and closely related references therein.
\par
Taking $r=-1$ in~\eqref{real-power-log-Eq} and comparing with~\eqref{bernoulli-second-dfn} yields
\begin{align}
b_n&=\frac{1}{n!}\sum_{k=0}^n\sum_{m=0}^k(-1)^m\binom{k}{m}\frac{s(n+m,m)}{\binom{n+m}{m}}\notag\\
&=\frac{1}{n!}\sum_{m=0}^n(-1)^m\Biggl[\sum_{k=m}^n\binom{k}{m}\Biggr]\frac{s(n+m,m)}{\binom{n+m}{m}}\notag\\
&=\frac{1}{n!}\sum_{m=0}^n(-1)^m \binom{n+1}{m+1}\frac{s(n+m,m)}{\binom{n+m}{m}} \label{real-power-log-r=-1}
\end{align}
for $n\in\mathbb{N}_0$,
where we used the combinatorial identity~\eqref{Identity58Spivey-art-2019}.
\end{rem}

\begin{rem}
The Maclaurin series expansion~\eqref{real-power-log-Eq} can also be derived from applying the series expansion~\eqref{Z0-alpha-ser-expan} to $f(z)=\frac{\ln(1+z)}{z}$ and considering the series expansion~\eqref{Stirl-No-First-GF}.
\end{rem}

\section{On generating functions of Stirling numbers of second kind}\label{sec-exp-Bern-2ndStirl}

In this section, we first supply two nice answers to the question at \url{https://math.stackexchange.com/q/413492} (accessed on 16 March 2025).

\begin{thm}\label{Bern-Exp-Ser-2nd-thm}
For $|x|<\pi$, we have the Maclaurin series expansion
\begin{equation}\label{Bern-Exp-Ser-2nd-Eq}
\sqrt{\frac{x}{\te^x-1}}\,
=\sum_{n=0}^{\infty}\Biggl[\sum_{k=0}^{n}\frac{1}{2^{2k}} \frac{\binom{2k}{k}}{\binom{n+k}{k}} \sum_{\ell=0}^k(-1)^{\ell}\binom{n+k}{k-\ell}S(n+\ell,\ell)\Biggr]\frac{x^n}{n!}.
\end{equation}
\end{thm}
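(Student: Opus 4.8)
The plan is to treat $\sqrt{\frac{x}{\te^x-1}}\,$ as the composite $f\circ h$ of $f(t)=t^{-1/2}$ with
\begin{equation*}
h(x)=\frac{\te^x-1}{x}=\int_0^1\te^{xv}\td v,
\end{equation*}
and to differentiate it $n$ times by the Fa\`a di Bruno formula~\eqref{Bruno-Bell-Polynomial}, in exactly the same manner as in the proof of Theorem~\ref{Sqrt-log-thm}. Since $f^{(k)}(t)=\bigl\langle-\frac12\bigr\rangle_k t^{-1/2-k}$ and $h^{(j)}(x)=\int_0^1v^j\te^{xv}\td v$ for $j\ge1$, the map $h$ and all of its derivatives are entire, $h(0)=1\ne0$, and $h^{(j)}(0)=\int_0^1v^j\td v=\frac1{j+1}$; hence letting $x\to0$ term by term in the finite Fa\`a di Bruno sum is legitimate and yields
\begin{equation*}
\lim_{x\to0}\Biggl[\sqrt{\frac{x}{\te^x-1}}\,\Biggr]^{(n)}
=\sum_{k=0}^n\biggl\langle-\frac12\biggr\rangle_k \bell_{n,k}\biggl(\frac12,\frac13,\dotsc,\frac1{n-k+2}\biggr),\quad n\in\mathbb{N}_0.
\end{equation*}

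Next I would substitute the closed form~\eqref{B-S-frac-value} for $\bell_{n,k}\bigl(\frac12,\frac13,\dotsc,\frac1{n-k+2}\bigr)$ and perform the constant bookkeeping. Writing $\bigl\langle-\frac12\bigr\rangle_k=(-1)^k\frac{(2k-1)!!}{2^k}$ absorbs the sign $(-1)^{k-\ell}$ appearing in~\eqref{B-S-frac-value} into $(-1)^{\ell}$, while the elementary identity $(2k)!=2^kk!\,(2k-1)!!$ turns $\frac{(2k-1)!!}{2^k}\cdot\frac{n!}{(n+k)!}$ into $\frac1{2^{2k}}\frac{\binom{2k}{k}}{\binom{n+k}{k}}$. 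Reading $\lim_{x\to0}[\,\cdot\,]^{(n)}$ as the coefficient of $\frac{x^n}{n!}$ in the Maclaurin series then produces precisely the expansion~\eqref{Bern-Exp-Ser-2nd-Eq}. For the range of validity, note that $h(x)=\frac{\te^x-1}{x}$ is entire, equals $1$ at the origin, and vanishes only at the points $2\pi\ti m$ with $m\in\mathbb{Z}\setminus\{0\}$, so on the simply connected disk $|x|<\pi$ (in fact on $|x|<2\pi$) it admits a single-valued holomorphic branch of its $(-1/2)$-th power taking the value $1$ at $0$, and the series represents this function there.

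I do not anticipate a genuine obstacle: the argument is a direct application of the Fa\`a di Bruno formula~\eqref{Bruno-Bell-Polynomial} together with the partial Bell value~\eqref{B-S-frac-value}. The only points requiring a little care are the interchange of $\lim_{x\to0}$ with the (finite) Fa\`a di Bruno sum, which follows at once from the continuity of $h^{(j)}$ and of $f^{(k)}\circ h$ at $0$, and the constant manipulation that massages $\bigl\langle-\frac12\bigr\rangle_k\frac{n!}{(n+k)!}$ into the stated shape $\frac1{2^{2k}}\binom{2k}{k}\big/\binom{n+k}{k}$; both are routine. As an alternative one could bypass Fa\`a di Bruno entirely by inserting $f(z)=\frac{\te^z-1}{z}$, $\alpha=-\frac12$, and $C_{\ell,j}=\frac{S(j+\ell,\ell)}{\binom{j+\ell}{\ell}}$ from~\eqref{2Stirl-funct-rew} into the expansion~\eqref{Z0-alpha-ser-expan}; that route also covers the general real power $\bigl(\frac{\te^x-1}{x}\bigr)^r$ considered later in this section.
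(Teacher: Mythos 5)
Your proposal is correct and follows essentially the same route as the paper's own proof: apply the Fa\`a di Bruno formula~\eqref{Bruno-Bell-Polynomial} to $u^{-1/2}$ with $u(x)=\int_0^1\te^{xv}\td v$, let $x\to0$ to reach $\bell_{n,k}\bigl(\frac12,\frac13,\dotsc,\frac1{n-k+2}\bigr)$, and then invoke~\eqref{B-S-frac-value} with the same constant bookkeeping $\bigl\langle-\frac12\bigr\rangle_k=(-1)^k\frac{(2k-1)!!}{2^k}$ and $\frac{(2k-1)!!}{2^k}\frac{n!}{(n+k)!}=\frac{1}{2^{2k}}\binom{2k}{k}\big/\binom{n+k}{k}$. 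The alternative via~\eqref{Z0-alpha-ser-expan} that you mention is also the one the paper notes, in a remark, for the general power $\bigl(\frac{\te^x-1}{x}\bigr)^r$.
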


\begin{proof}
Denoting
$$
u=u(x)=\frac{\te^x-1}{x}=\int_0^1\te^{xv}\td v,\quad x\in\mathbb{R},
$$
utilizing the Fa\`a di Bruno formula~\eqref{Bruno-Bell-Polynomial}, and applying the identity~\eqref{B-S-frac-value}, we acquire
\begin{align*}
\biggl(\sqrt{\frac{x}{\te^x-1}}\,\biggr)^{(n)}
&=\sum_{k=0}^{n}\bigl(u^{-1/2}\bigr)^{(k)} \bell_{n,k}\bigl(u'(x), u''(x), \dotsc, u^{(n-k+1)}(x)\bigr)\\
&=\sum_{k=0}^{n}\biggl\langle-\frac{1}{2}\biggr\rangle_{k}\frac{1}{u^{1/2+k}} \bell_{n,k}\biggl(\int_0^1v\te^{xv}\td v,\\
&\quad\int_0^1v^2\te^{xv}\td v,\dotsc, \int_0^1v^{n-k+1}\te^{xv}\td v\biggr)\\
&\to\sum_{k=0}^{n}\biggl\langle-\frac{1}{2}\biggr\rangle_{k} \bell_{n,k}\biggl(\int_0^1v\td v, \int_0^1v^2\td v, \\
&\quad\dotsc, \int_0^1v^{n-k+1}\td v\biggr), \quad x\to0\\
&=\sum_{k=0}^{n}\biggl\langle-\frac{1}{2}\biggr\rangle_{k} \bell_{n,k}\biggl(\frac{1}{2}, \frac{1}{3},\dotsc, \frac1{n-k+2}\biggr)\\
&=\sum_{k=0}^{n}(-1)^k\frac{(2k-1)!!}{2^k} \frac{n!}{(n+k)!}\sum_{\ell=0}^k(-1)^{k-\ell}\binom{n+k}{k-\ell}S(n+\ell,\ell)\\
&=\sum_{k=0}^{n}\frac{1}{2^{2k}} \frac{\binom{2k}{k}}{\binom{n+k}{k}}\sum_{\ell=0}^k(-1)^{\ell}\binom{n+k}{k-\ell}S(n+\ell,\ell).
\end{align*}
The proof of Theorem~\ref{Bern-Exp-Ser-2nd-thm} is complete.
\end{proof}

\begin{thm}\label{Bern-Exp-Ser-thm}
For $|x|<\pi$, we have the Maclaurin series expansion
\begin{equation}\label{Bern-Exp-Ser-Eq}
\sqrt{\frac{x}{\te^x-1}}\,
=-\sum_{n=0}^\infty\Biggl[\sum_{k=0}^nS(n,k)\sum_{\ell=0}^k\frac{(2\ell-3)!!}{(2\ell)!!} \sum_{m=0}^\ell(-1)^m\binom{\ell}{m}\frac{s(k+m,m)}{\binom{k+m}{m}}\Biggr]\frac{x^n}{n!}.
\end{equation}
\end{thm}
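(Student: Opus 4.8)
The plan is to exhibit $\sqrt{\frac{x}{\te^x-1}}$ as a composition of the function treated in Theorem~\ref{Sqrt-log-thm} with $\te^x-1$, and then to run the Fa\`a di Bruno formula~\eqref{Bruno-Bell-Polynomial}. The key observation is that, for $x$ in a neighbourhood of $0$, one has $\ln\bigl(1+(\te^x-1)\bigr)=\ln\te^x=x$ with the principal branch, so that
\[
\sqrt{\frac{x}{\te^x-1}}\,=\sqrt{\frac{\ln\bigl(1+(\te^x-1)\bigr)}{\te^x-1}}\,=(F\circ h)(x),\qquad F(y)=\sqrt{\frac{\ln(1+y)}{y}}\,,\quad h(x)=\te^x-1.
\]
First I would differentiate this composition $n$ times by~\eqref{Bruno-Bell-Polynomial}, getting
\[
\biggl(\sqrt{\frac{x}{\te^x-1}}\,\biggr)^{(n)}=\sum_{k=0}^{n}F^{(k)}\bigl(h(x)\bigr)\,\bell_{n,k}\bigl(h'(x),h''(x),\dotsc,h^{(n-k+1)}(x)\bigr),
\]
and then let $x\to0$. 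Since $h(0)=0$ and $h^{(j)}(0)=1$ for every $j\ge1$, the identity~\eqref{Bell-stirling} turns each Bell polynomial into $\bell_{n,k}(1,1,\dotsc,1)=S(n,k)$, so the $n$th Maclaurin coefficient of the left-hand side is $\sum_{k=0}^{n}F^{(k)}(0)\,S(n,k)$.

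The remaining task is to evaluate $F^{(k)}(0)$, and this is precisely what Theorem~\ref{Sqrt-log-thm} delivers: the coefficient of $\frac{x^k}{k!}$ in the expansion~\eqref{Sqrt-log-Eq} gives
\[
F^{(k)}(0)=\lim_{x\to0}\biggl[\sqrt{\frac{\ln(1+x)}{x}}\,\biggr]^{(k)}=-\sum_{\ell=0}^{k}\frac{(2\ell-3)!!}{(2\ell)!!}\sum_{m=0}^{\ell}(-1)^m\binom{\ell}{m}\frac{s(k+m,m)}{\binom{k+m}{m}}.
\]
Inserting this into $\sum_{k=0}^{n}F^{(k)}(0)\,S(n,k)$ reproduces exactly the bracketed coefficient of $\frac{x^n}{n!}$ in~\eqref{Bern-Exp-Ser-Eq}, so that the Maclaurin expansion assembles term by term. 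An equivalent bookkeeping, perhaps the cleaner one to write out, is to substitute the power series~\eqref{Sqrt-log-Eq} for $F$ directly into $F(\te^x-1)$, expand each power by the classical identity $\frac{(\te^x-1)^k}{k!}=\sum_{n\ge k}S(n,k)\frac{x^n}{n!}$ (a special case of~\eqref{Bruno-Bell-Polynomial} and~\eqref{Bell-stirling}), and interchange the two summations.

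I expect no conceptual obstacle; the care is all in the bookkeeping and in the analytic justification. One must observe that $h(x)=\te^x-1$ satisfies $|h(x)|<1$ for $x$ near $0$, so that the series~\eqref{Sqrt-log-Eq} for $F$ may legitimately be composed with $h$ and the resulting double series over $k$ and $n$ converges absolutely there, which is what licenses the rearrangement; the identity of the two convergent power series then propagates by analytic continuation to the stated disk $|x|<\pi$, on which $\frac{x}{\te^x-1}$ is holomorphic and nonvanishing and hence possesses a holomorphic principal square root. Finally, one should keep track of the harmless boundary conventions for the double factorials $(2\ell-3)!!$ at $\ell=0,1$ and of $S(n,0)=\delta_{n,0}$, so that the nested sums over $k\ge\ell\ge m$ begin correctly; confusing the dummy indices between Theorem~\ref{Sqrt-log-thm} and the present statement is really the only thing to watch.
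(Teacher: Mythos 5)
Your proposal is correct and follows essentially the same route as the paper: write $\sqrt{x/(\te^x-1)}=F(\te^x-1)$ with $F(y)=\sqrt{\ln(1+y)/y}$, apply the Fa\`a di Bruno formula~\eqref{Bruno-Bell-Polynomial}, reduce the Bell polynomials to $S(n,k)$ via~\eqref{Bell(n-k)} and~\eqref{Bell-stirling}, and evaluate $F^{(k)}(0)$ from Theorem~\ref{Sqrt-log-thm}. The only cosmetic difference is that the paper explicitly factors $\bell_{n,k}(\te^x,\dotsc,\te^x)=\te^{kx}\bell_{n,k}(1,\dotsc,1)$ before letting $x\to0$, whereas you pass directly to $h^{(j)}(0)=1$.
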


\begin{proof}
This interesting proof is a slightly revised version of the answer at the site \url{https://math.stackexchange.com/a/4657245} (accessed on 15 March 2025).
\par
Making use of the expansion~\eqref{Sqrt-log-Eq} in Theorem~\ref{Sqrt-log-thm}, we derive
\begin{align*}
\biggl(\sqrt{\frac{x}{\te^x-1}}\,\biggr)^{(n)}
&=\Biggl[\sqrt{\frac{\ln(1+u(x))}{u(x)}}\,\Biggr]^{(n)}, \quad u=u(x)=\te^x-1\\
&=\sum_{k=0}^n\Biggl[\sqrt{\frac{\ln(1+u)}{u}}\,\Biggr]^{(k)} \bell_{n,k}(\te^x,\te^x,\dotsc,\te^x)\\
&=\sum_{k=0}^n\Biggl[\sqrt{\frac{\ln(1+u)}{u}}\,\Biggr]^{(k)} \te^{kx} \bell_{n,k}(1,1,\dotsc,1)\\
&\to \sum_{k=0}^n\lim_{u\to0}\Biggl[\sqrt{\frac{\ln(1+u)}{u}}\,\Biggr]^{(k)} S(n,k), \quad x\to0\\
&=-\sum_{k=0}^n\Biggl[\sum_{\ell=0}^k\frac{(2\ell-3)!!}{(2\ell)!!} \sum_{m=0}^\ell(-1)^m\binom{\ell}{m}\frac{s(k+m,m)}{\binom{k+m}{m}}\Biggr]S(n,k),
\end{align*}
where we used the identities~\eqref{Bell(n-k)} and~\eqref{Bell-stirling}.
Consequently, we arrive at the Maclaurin expansion~\eqref{Bern-Exp-Ser-Eq}. The proof of Theorem~\ref{Bern-Exp-Ser-thm} is thus complete.
\end{proof}

\begin{rem}
Comparing~\eqref{Bern-Exp-Ser-Eq} with~\eqref{Bern-Exp-Ser-2nd-Eq} gives the combinatorial identity
\begin{multline}\label{Conn-Stirl-1st-2nd}
\sum_{k=0}^{n}\frac{1}{2^{2k}} \frac{\binom{2k}{k}}{\binom{n+k}{k}} \sum_{\ell=0}^k(-1)^{\ell}\binom{n+k}{k-\ell}S(n+\ell,\ell)\\
=\sum_{k=0}^nS(n,k)\sum_{\ell=0}^k\frac{(2\ell-3)!!}{(2\ell)!!} \sum_{m=0}^\ell(-1)^m\binom{\ell}{m}\frac{s(k+m,m)}{\binom{k+m}{m}}, \quad n\in\mathbb{N}_0.
\end{multline}
\end{rem}

\begin{thm}\label{Bern-Exp-Ser-Gen-thm}
For $r\in\mathbb{R}$ and $|x|<2\pi$, we have the Maclaurin series expansion
\begin{equation}\label{Bern-Exp-Ser-Gen-Eq}
\biggl(\frac{\te^x-1}{x}\biggr)^r
=\sum_{n=0}^{\infty}\Biggl[\sum_{k=0}^{n} \frac{(-r)_{k}}{(n+k)!} \sum_{\ell=0}^k(-1)^{\ell}\binom{n+k}{k-\ell}S(n+\ell,\ell)\Biggr]x^n.
\end{equation}
\end{thm}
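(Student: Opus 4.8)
The plan is to proceed exactly along the lines of the proofs of Theorems~\ref{Bern-Exp-Ser-2nd-thm} and~\ref{real-power-log-thm}, namely to combine the Fa\`a di Bruno formula~\eqref{Bruno-Bell-Polynomial} with the Bell-polynomial identity~\eqref{B-S-frac-value}. Put $u=u(x)=\frac{\te^x-1}{x}=\int_0^1\te^{xv}\td v$. The disk $|x|<2\pi$ is simply connected, and on it $u$ is analytic and zero-free (the zeros of $\te^x-1$ lie at $x=2\pi\ti m$ with $m\in\mathbb{Z}\setminus\{0\}$, all on or outside $|x|=2\pi$, while the zero at the origin is cancelled by the factor $x$); hence $\log u$ has a single-valued analytic branch there, normalized by $\log u(0)=0$, and $u^r:=\te^{r\log u}$ is a well-defined analytic function on $|x|<2\pi$ whose Maclaurin coefficients we want. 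First I would apply~\eqref{Bruno-Bell-Polynomial} with $f(t)=t^r$, so $f^{(k)}(t)=\langle r\rangle_k t^{r-k}$, and $h=u$, to write, for every $n\in\mathbb{N}_0$,
\[
\biggl(\biggl(\frac{\te^x-1}{x}\biggr)^r\biggr)^{(n)}
=\sum_{k=0}^{n}\langle r\rangle_k\,u^{r-k}\,\bell_{n,k}\bigl(u'(x),u''(x),\dotsc,u^{(n-k+1)}(x)\bigr).
\]

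Next I would let $x\to0$ in this finite sum. Since $u^{(j)}(x)=\int_0^1 v^j\te^{xv}\td v$ is continuous in $x$ with $u^{(j)}(0)=\int_0^1 v^j\td v=\frac1{j+1}$, and $u(0)=1$, the right-hand side tends to $\sum_{k=0}^{n}\langle r\rangle_k\,\bell_{n,k}\bigl(\frac12,\frac13,\dotsc,\frac1{n-k+2}\bigr)$. Then I would invoke~\eqref{B-S-frac-value} to rewrite the Bell polynomial as $\frac{n!}{(n+k)!}\sum_{\ell=0}^k(-1)^{k-\ell}\binom{n+k}{k-\ell}S(n+\ell,\ell)$, pull out the factor $(-1)^k$, and use the relation $(-r)_k=(-1)^k\langle r\rangle_k$ to obtain
\[
\lim_{x\to0}\biggl(\biggl(\frac{\te^x-1}{x}\biggr)^r\biggr)^{(n)}
=n!\sum_{k=0}^{n}\frac{(-r)_k}{(n+k)!}\sum_{\ell=0}^k(-1)^{\ell}\binom{n+k}{k-\ell}S(n+\ell,\ell).
\]
Dividing by $n!$ identifies the $n$th Maclaurin coefficient of $\bigl(\frac{\te^x-1}{x}\bigr)^r$ with the bracketed quantity in~\eqref{Bern-Exp-Ser-Gen-Eq}, and summing over $n\ge0$ finishes the proof.

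There is no genuinely hard step; the only point needing a touch of care is the passage to the limit, which is legitimate because $\bigl(\frac{\te^x-1}{x}\bigr)^r$ is analytic near $0$ (so $\lim_{x\to0}(\,\cdot\,)^{(n)}$ equals its $n$th derivative at $0$, i.e.\ $n!$ times the $n$th Maclaurin coefficient) and because differentiation under the integral sign makes each $\bell_{n,k}$-summand continuous at $x=0$. A second, purely formal route that bypasses this bookkeeping is to apply the transfer principle~\eqref{Z0-alpha-ser-expan} to $f(z)=\frac{\te^z-1}{z}$ with $\alpha=r$, reading off $C_{\ell,j}=\frac{S(j+\ell,\ell)}{\binom{j+\ell}{\ell}}$ from the generating function~\eqref{2Stirl-funct-rew}; since $f^{r-q}(0)=1$ for all $q$, this yields an equivalent series for $\bigl(\frac{\te^z-1}{z}\bigr)^r$ that, after a short binomial rearrangement (using $\binom{n+k}{k}\binom{k}{q}=\binom{n+k}{k-q}\binom{n+q}{q}$), coincides with~\eqref{Bern-Exp-Ser-Gen-Eq}. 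In both approaches the radius of convergence $2\pi$ is dictated by the nearest singularities $z=\pm2\pi\ti$ of $\frac{\te^z-1}{z}$.
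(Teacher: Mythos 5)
Your proposal is correct and follows essentially the same route as the paper: Fa\`a di Bruno applied to $u(x)=\int_0^1\te^{xv}\td v$, passage to the limit $x\to0$, the identity~\eqref{B-S-frac-value}, and the relation $(-r)_k=(-1)^k\langle r\rangle_k$; your alternative route via~\eqref{Z0-alpha-ser-expan} and~\eqref{2Stirl-funct-rew} is also exactly what the paper notes in a subsequent remark. The extra care you take with the branch of $u^r$ and the radius of convergence is a small bonus the paper omits.
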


\begin{proof}
As done in the proof of Theorem~\ref{Bern-Exp-Ser-2nd-thm}, utilizing the Fa\`a di Bruno formula~\eqref{Bruno-Bell-Polynomial}, and applying the identity~\eqref{B-S-frac-value}, we arrive at
\begin{align*}
\biggl[\biggl(\frac{\te^x-1}{x}\biggr)^r\biggr]^{(n)}
&=\sum_{k=0}^{n}\bigl(u^r\bigr)^{(k)} \bell_{n,k}\bigl(u'(x), u''(x), \dotsc, u^{(n-k+1)}(x)\bigr)\\
&=\sum_{k=0}^{n}\langle r\rangle_{k}u^{r-k}(x) \bell_{n,k}\biggl(\int_0^1v\te^{xv}\td v,\\
&\quad\int_0^1v^2\te^{xv}\td v,\dotsc, \int_0^1v^{n-k+1}\te^{xv}\td v\biggr)\\
&\to\sum_{k=0}^{n}\langle r\rangle_{k} \bell_{n,k}\biggl(\int_0^1v\td v, \int_0^1v^2\td v, \dotsc, \int_0^1v^{n-k+1}\td v\biggr), \quad x\to0\\
&=\sum_{k=0}^{n}\langle r\rangle_{k} \bell_{n,k}\biggl(\frac{1}{2}, \frac{1}{3},\dotsc, \frac1{n-k+2}\biggr)\\
&=\sum_{k=0}^{n}\langle r\rangle_{k} \frac{n!}{(n+k)!}\sum_{\ell=0}^k(-1)^{k-\ell}\binom{n+k}{k-\ell}S(n+\ell,\ell)\\
&=n!\sum_{k=0}^{n} \frac{(-r)_{k}}{(n+k)!}\sum_{\ell=0}^k(-1)^{\ell}\binom{n+k}{k-\ell}S(n+\ell,\ell).
\end{align*}
The proof of Theorem~\ref{Bern-Exp-Ser-Gen-thm} is thus complete.
\end{proof}

\begin{rem}
Comparing~\eqref{Bern-Exp-Ser-Gen-Eq} for $r=-1$ with~\eqref{Bernoulli-Gen-Eq} leads to
\begin{equation}\label{Equal-Stirl-Bern2nd}
B_{2n}=\sum_{k=0}^{2n}\frac{k!}{(2n+k)!}\sum_{\ell=0}^k(-1)^{\ell}\binom{2n+k}{k-\ell}S(2n+\ell,\ell), \quad n\in\mathbb{N}
\end{equation}
and
\begin{equation}\label{Equal-Stirl-Bern2nd=0}
\sum_{k=0}^{2n+1} \frac{k!}{(2n+k+1)!}\sum_{\ell=0}^k(-1)^{\ell}\binom{2n+k+1}{k-\ell}S(2n+\ell+1,\ell)=0, \quad n\in\mathbb{N}.
\end{equation}
\end{rem}

\begin{rem}
Comparing~\eqref{Bern-Exp-Ser-Gen-Eq} for $r\in\mathbb{N}_0$ with~\eqref{2Stirl-funct-rew} gives the combinatorial identity
\begin{equation}\label{diag-2nd-stirl-eq}
S(n+r,r)
=\frac{(n+r)!}{r!}\sum_{k=0}^{n} \frac{(-r)_{k}}{(n+k)!}\sum_{\ell=0}^k(-1)^{\ell}\binom{n+k}{k-\ell}S(n+\ell,\ell), \quad n,r\in\mathbb{N}_0.
\end{equation}
This identity is seemingly a generalization of the diagonal recurrence relations, discussed in~\cite{MIA-4666.tex}, for the Stirling numbers of the second kind $S(n,k)$.
\end{rem}

\begin{rem}
The expansion~\eqref{Bern-Exp-Ser-Gen-Eq} in Theorem~\ref{Bern-Exp-Ser-Gen-thm} essentially gives a closed-form formula of the generalized Bernoulli numbers $B_n^{(r)}$ in terms of the Stirling numbers of the second kind $S(n,k)$ by
\begin{equation}\label{Equal-Stirl-Bern3rd}
B_n^{(r)}=n!\sum_{k=0}^{n} \frac{(r)_{k}}{(n+k)!}\sum_{\ell=0}^k(-1)^{\ell}\binom{n+k}{k-\ell}S(n+\ell,\ell)
\end{equation}
for $n\in\mathbb{N}_0$ and $r\in\mathbb{R}$. For detailed information on the generalized Bernoulli numbers $B_n^{(r)}$, please refer to~\cite[Chapter~1]{Temme-96-book}.
\end{rem}

\begin{rem}
The Maclaurin series expansion~\eqref{Bern-Exp-Ser-Gen-Eq} can also be concluded from applying~\eqref{Z0-alpha-ser-expan} together with~\eqref{2Stirl-funct-rew}.
\end{rem}

\section{Identities connecting Stirling numbers of first and second kinds}\label{connection-sec}

In this section, motivated by Theorem~\ref{Bern-Exp-Ser-thm} and its proof, we discover the following couple of identities, similar to~\eqref{Conn-Stirl-1st-2nd}, connecting the Stirling numbers of the first and second kinds $s(n,k)$ and $S(n,k)$.

\begin{thm}\label{exp-log-trans-thm}
For $|x|<\pi$ and $r\in\mathbb{R}$, the series expansion
\begin{equation}\label{exp-log-trans-ser}
\biggl(\frac{\te^x-1}{x}\biggr)^r
=\sum_{n=0}^{\infty}\Biggl[\sum_{\ell=0}^{n}S(n,\ell)\sum_{k=0}^\ell\frac{(r)_k}{k!} \sum_{m=0}^k(-1)^m\binom{k}{m}\frac{s(\ell+m,m)}{\binom{\ell+m}{m}}\Biggr]\frac{x^n}{n!}
\end{equation}
is valid. Consequently, the combinatorial identity
\begin{multline}\label{Stirl-1st-2nd-conn}
\sum_{k=0}^{n} \frac{(r)_{k}}{(n+k)!} \sum_{\ell=0}^k(-1)^{\ell}\binom{n+k}{k-\ell}S(n+\ell,\ell)\\
=\frac{1}{n!}\sum_{\ell=0}^{n}S(n,\ell)\sum_{k=0}^\ell\frac{(-r)_k}{k!} \sum_{m=0}^k(-1)^m\binom{k}{m}\frac{s(\ell+m,m)}{\binom{\ell+m}{m}}
\end{multline}
holds for $n\in\mathbb{N}_0$.
\end{thm}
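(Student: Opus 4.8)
The plan is to transplant the composition argument from the proof of Theorem~\ref{Bern-Exp-Ser-thm}, now running it for an arbitrary real power. First I would set $u=u(x)=\te^x-1$, so that $x=\ln(1+u)$ and hence
\[
\biggl(\frac{\te^x-1}{x}\biggr)^r=\biggl(\frac{u}{\ln(1+u)}\biggr)^r=\biggl[\frac{\ln(1+u)}{u}\biggr]^{-r}=F\bigl(u(x)\bigr),\qquad F(t)=\biggl[\frac{\ln(1+t)}{t}\biggr]^{-r}.
\]
Since $u(0)=0$ and $u^{(j)}(x)=\te^x$ for every $j\ge1$, so that $u^{(j)}(0)=1$, applying the Fa\`a di Bruno formula~\eqref{Bruno-Bell-Polynomial} and then the homogeneity relation~\eqref{Bell(n-k)} and the evaluation~\eqref{Bell-stirling} gives
\[
\biggl[\biggl(\frac{\te^x-1}{x}\biggr)^r\biggr]^{(n)}=\sum_{k=0}^nF^{(k)}\bigl(\te^x-1\bigr)\,\te^{kx}\,S(n,k)\longrightarrow\sum_{k=0}^nF^{(k)}(0)\,S(n,k),\qquad x\to0.
\]

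Next I would read off $F^{(k)}(0)$ from Theorem~\ref{real-power-log-thm}: the expansion~\eqref{real-power-log-Eq} with $-r$ substituted for $r$, which turns $(-r)_k$ into $(r)_k$, yields
\[
F^{(k)}(0)=\sum_{j=0}^k\frac{(r)_j}{j!}\sum_{m=0}^j(-1)^m\binom{j}{m}\frac{s(k+m,m)}{\binom{k+m}{m}}.
\]
Substituting this into the limit above and relabelling $k\mapsto\ell$ and $j\mapsto k$ gives precisely the bracketed coefficient in~\eqref{exp-log-trans-ser}; since the limit of the $n$th derivative at $0$ is the Taylor coefficient of $\frac{x^n}{n!}$, the series expansion~\eqref{exp-log-trans-ser} follows, with the disc $|x|<\pi$ inherited, exactly as in Theorem~\ref{Bern-Exp-Ser-thm}, from the composition $u=\te^x-1$ being admissible near the origin together with the analyticity of the left-hand side there.

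For the combinatorial identity~\eqref{Stirl-1st-2nd-conn}, I would equate the two now-available power series for $\bigl(\frac{\te^x-1}{x}\bigr)^r$, namely~\eqref{exp-log-trans-ser} and~\eqref{Bern-Exp-Ser-Gen-Eq} from Theorem~\ref{Bern-Exp-Ser-Gen-thm}. Matching the coefficients of $x^n$ produces the stated identity with $(-r)_k$ on the left and $(r)_k$ on the right; replacing $r$ by $-r$ throughout, legitimate because both expansions hold for every $r\in\mathbb{R}$, recasts it in the displayed form~\eqref{Stirl-1st-2nd-conn}.

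I do not anticipate a genuine obstacle: the argument is a direct combination of the proof scheme of Theorem~\ref{Bern-Exp-Ser-thm} with the expansion of Theorem~\ref{real-power-log-thm}. The only point demanding care is the bookkeeping of the Pochhammer arguments $(r)_k$ versus $(-r)_k$, which is flipped once when passing to the outer function $F$ and once more in the final coefficient comparison; one should also note briefly that $u=\te^x-1$ maps a neighbourhood of $0$ into the domain $|u|<1$ of~\eqref{real-power-log-Eq}, so that the termwise reading of Taylor coefficients is justified.
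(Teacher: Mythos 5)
Your proposal is correct and follows essentially the same route as the paper: both write $\bigl(\frac{\te^x-1}{x}\bigr)^r$ as the composition of $\bigl(\frac{u}{\ln(1+u)}\bigr)^r=\bigl[\frac{\ln(1+u)}{u}\bigr]^{-r}$ with $u=\te^x-1$, apply the Fa\`a di Bruno formula~\eqref{Bruno-Bell-Polynomial} with~\eqref{Bell(n-k)} and~\eqref{Bell-stirling}, read off the inner derivatives at $0$ from Theorem~\ref{real-power-log-thm} with $r$ replaced by $-r$, and then obtain~\eqref{Stirl-1st-2nd-conn} by comparing with~\eqref{Bern-Exp-Ser-Gen-Eq}. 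Your explicit tracking of the two sign flips in the Pochhammer symbol (once when passing to the outer function, once in the final comparison) is a point the paper leaves implicit, and it is handled correctly.
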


\begin{proof}
Since
\begin{equation}\label{int-repres-exp-log}
\frac{\te^x-1}{x}=\int_0^1\te^{xv}\td v=\frac{u(x)}{\ln[1+u(x)]}=\int_0^1[1+u(x)]^{v}\td v,
\end{equation}
where $u=u(x)=\te^x-1$, we arrive at
\begin{gather*}
\biggl[\biggl(\frac{\te^x-1}{x}\biggr)^r\biggr]^{(n)}
=\frac{\operatorname{d}^n}{\td x^n}\biggl[\biggl(\frac{u(x)}{\ln[1+u(x)]}\biggr)^r\biggr]\\
=\sum_{\ell=0}^{n}\frac{\operatorname{d}^\ell}{\td u^\ell}\biggl[\biggl(\frac{u}{\ln(1+u)}\biggr)^r\biggr] \bell_{n,\ell}\bigl(u'(x), u''(x), \dotsc,u^{n-\ell+1}(x)\bigr)\\
\to\sum_{\ell=0}^{n}\Biggl[\sum_{k=0}^\ell\frac{(r)_k}{k!} \sum_{m=0}^k(-1)^m\binom{k}{m}\frac{s(\ell+m,m)}{\binom{\ell+m}{m}}\Biggr] \bell_{n,\ell}(\underbrace{1,1,\dotsc,1}_{n-\ell+1}), \quad x\to0\\
=\sum_{\ell=0}^{n}\Biggl[\sum_{k=0}^\ell\frac{(r)_k}{k!} \sum_{m=0}^k(-1)^m\binom{k}{m}\frac{s(\ell+m,m)}{\binom{\ell+m}{m}}\Biggr]S(n,\ell),
\end{gather*}
where we used the series expansion~\eqref{real-power-log-Eq} in Theorem~\ref{real-power-log-thm} and the identity~\eqref{Bell-stirling}. Hence, the Maclaurin series expansion~\eqref{exp-log-trans-ser} is thus proved.
\par
Comparing~\eqref{exp-log-trans-ser} with~\eqref{Bern-Exp-Ser-Gen-Eq} gives the combinatorial identity~\eqref{Stirl-1st-2nd-conn}. The proof of Theorem~\ref{exp-log-trans-thm} is complete.
\end{proof}

\begin{rem}
From the definition~\eqref{bernoulli-second-dfn} and the integral representation
\begin{equation*}
\frac{x}{\ln(1+x)}=\int_0^1(1+x)^{v}\td v,\quad |x|<1
\end{equation*}
in~\eqref{int-repres-exp-log}, we recover the integral representation
\begin{align}
b_n&=\frac{1}{n!}\lim_{x\to0}\biggl[\frac{x}{\ln(1+x)}\biggr]^{(n)}\notag\\
&=\frac{1}{n!}\lim_{x\to0}\biggl[\int_0^1(1+x)^{v}\td v\biggr]^{(n)}\notag\\
&=\frac{1}{n!}\lim_{x\to0}\biggl[\int_0^1\langle v\rangle_n(1+x)^{v}\td v\biggr]\notag\\
&=\frac{1}{n!}\int_0^1\langle v\rangle_n\td v\label{falling-2ndbER}
\end{align}
of the Bernoulli numbers of the second kind $b_n$ for $n\in\mathbb{N}_0$, where the falling factorial $\langle v\rangle_n$ is defined by~\eqref{Fall-Factorial-Dfn-Eq}. The integral representation~\eqref{falling-2ndbER} was obtained in~\cite[p.~12]{Davis-1955-Monthly} and mentioned in~\cite[p.~1]{Nemes-JIS-2011}.
\par
By the way, Theorem~1 in the paper~\cite{Bernoulli2nd-Property.tex} reads that the Bernoulli numbers of the second kind $b_n$ can be represented as
\begin{equation*}
b_n=(-1)^{n+1}\int_1^\infty\frac{1}{([\ln(t-1)]^2+\pi^2)t^{n}}\td t, \quad n\in\mathbb{N}.
\end{equation*}
\par
Applying the relation
\begin{equation*}
\langle z\rangle_n=\sum_{\ell=0}^{n}s(n,\ell)z^\ell, \quad z\in\mathbb{C}, \quad n\in\mathbb{N}_0
\end{equation*}
in~\cite[p.~9, (1.26)]{Temme-96-book} to the integral representation~\eqref{falling-2ndbER} recovers the closed-form formula
\begin{equation}\label{Nemes-1st-Bernoulli}
b_n=\frac{1}{n!}\int_0^1\sum_{\ell=0}^{n}s(n,\ell)v^\ell\td v
=\frac{1}{n!}\sum_{\ell=0}^{n}\frac{s(n,\ell)}{\ell+1}, \quad n\in\mathbb{N}_0.
\end{equation}
This closed-form formula was derived in~\cite[p.~2]{Nemes-JIS-2011} and mentioned in~\cite[p.~243]{Bernoulli-Stirling-Beograd.tex}.
\end{rem}

\begin{rem}
The identity~\eqref{Conn-Stirl-1st-2nd} is the case $r=-\frac{1}{2}$ in~\eqref{Stirl-1st-2nd-conn}.
\end{rem}

\begin{thm}\label{log-exp-trans-thm}
For $|x|<1$ and $r\in\mathbb{R}$, the series expansion
\begin{multline}\label{log-ser-2stirl-eq}
\biggl[\frac{\ln(1+x)}{x}\biggr]^r=\sum_{n=0}^{\infty}\Biggl[\sum_{k=0}^{n}k!s(n,k) \sum_{m=0}^{k} \frac{(r)_{m}}{(k+m)!} \sum_{\ell=0}^m(-1)^{\ell}\binom{k+m}{m-\ell}S(k+\ell,\ell)\Biggr]\frac{x^n}{n!}
\end{multline}
is valid. Consequently, the combinatorial identity
\begin{multline}\label{log-exp-2stir-Eq}
\sum_{k=0}^n\frac{(r)_k}{k!} \sum_{m=0}^k(-1)^m\binom{k}{m}\frac{s(n+m,m)}{\binom{n+m}{m}}\\
=\sum_{k=0}^{n}k!s(n,k) \sum_{m=0}^{k} \frac{(-r)_{m}}{(k+m)!} \sum_{\ell=0}^m(-1)^{\ell}\binom{k+m}{m-\ell}S(k+\ell,\ell)
\end{multline}
holds for $n\in\mathbb{N}_0$ and $r\in\mathbb{R}$.
\end{thm}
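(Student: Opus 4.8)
The plan is to run the proof of Theorem~\ref{exp-log-trans-thm} in reverse, exchanging the roles of the two unifications. Since $\te^{u}-1=x$ whenever $u=\ln(1+x)$, we have
\[
\biggl[\frac{\ln(1+x)}{x}\biggr]^{r}=\biggl(\frac{u}{\te^{u}-1}\biggr)^{r}=\biggl(\frac{\te^{u}-1}{u}\biggr)^{-r},\qquad u=u(x)=\ln(1+x),
\]
so that $\bigl[\frac{\ln(1+x)}{x}\bigr]^{r}$ is the composition of the outer function $G(u)=\bigl(\frac{\te^{u}-1}{u}\bigr)^{-r}$ with the inner function $u(x)=\ln(1+x)$; this is precisely the substitution $u=\te^{x}-1$ used in Theorem~\ref{exp-log-trans-thm}, read backwards.

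First I would assemble the two ingredients needed to apply the Fa\`a di Bruno formula~\eqref{Bruno-Bell-Polynomial} to $G\circ u$ at $x=0$. For the outer factor, Theorem~\ref{Bern-Exp-Ser-Gen-thm}---namely the expansion~\eqref{Bern-Exp-Ser-Gen-Eq} with its parameter replaced by $-r$---already exhibits $G(u)$ as a convergent power series in $u$ near $u=0$, whence
\[
\lim_{u\to0}\frac{\operatorname{d}^{k}}{\td u^{k}}\biggl[\biggl(\frac{\te^{u}-1}{u}\biggr)^{-r}\biggr]
=k!\sum_{m=0}^{k}\frac{(r)_{m}}{(k+m)!}\sum_{\ell=0}^{m}(-1)^{\ell}\binom{k+m}{m-\ell}S(k+\ell,\ell),\qquad k\in\mathbb{N}_{0}.
\]
For the inner factor, from $u^{(j)}(x)=(-1)^{j-1}\frac{(j-1)!}{(1+x)^{j}}$ one gets $u^{(j)}(0)=(-1)^{j-1}(j-1)!$, and then combining the closed form~\eqref{Bell=0!s(n-k)} with the homogeneity~\eqref{Bell(n-k)} (with $a=b=-1$) yields $\bell_{n,k}\bigl(u'(0),u''(0),\dotsc,u^{(n-k+1)}(0)\bigr)=s(n,k)$.

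Substituting both evaluations into~\eqref{Bruno-Bell-Polynomial}, letting $x\to0$, and dividing by $n!$ then gives precisely the Maclaurin expansion~\eqref{log-ser-2stirl-eq}. Comparing~\eqref{log-ser-2stirl-eq} coefficientwise with the expansion~\eqref{real-power-log-Eq} of Theorem~\ref{real-power-log-thm}, and afterwards replacing $r$ by $-r$ (legitimate because both expansions are valid for every real parameter, equivalently because the two sides agree as polynomials in $r$), produces the combinatorial identity~\eqref{log-exp-2stir-Eq}.

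The sign bookkeeping in the $\bell_{n,k}$-evaluation is routine; the one point that needs genuine care---and what I expect to be the main obstacle---is justifying the stated radius $|x|<1$. Here one has to observe that $\frac{\ln(1+x)}{x}$ is holomorphic and nowhere vanishing on the open unit disk (it equals $1$ at $x=0$, while $\ln(1+x)$ vanishes there only at $x=0$), that $\frac{\te^{u}-1}{u}$ is entire and equals $1$ at $u=0$ so that $G(u)$ is holomorphic near $u=0$, and hence that $\bigl[\frac{\ln(1+x)}{x}\bigr]^{r}$ is holomorphic on all of $|x|<1$; this both legitimises the interchange of $\lim_{x\to0}$ with differentiation in the Fa\`a di Bruno step and guarantees that the series~\eqref{log-ser-2stirl-eq} converges on the whole disk $|x|<1$, not merely on a small neighbourhood of the origin.
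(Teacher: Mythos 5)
Your proposal is correct and follows essentially the same route as the paper: write $\bigl[\frac{\ln(1+x)}{x}\bigr]^r$ as $\bigl(\frac{w}{\te^w-1}\bigr)^r$ with $w=\ln(1+x)$, apply the Fa\`a di Bruno formula~\eqref{Bruno-Bell-Polynomial}, read off the outer derivatives from~\eqref{Bern-Exp-Ser-Gen-Eq} with parameter $-r$, evaluate the partial Bell polynomials via~\eqref{Bell(n-k)} and~\eqref{Bell=0!s(n-k)} to get $s(n,k)$, and compare with~\eqref{real-power-log-Eq} after the substitution $r\mapsto-r$. Your added justification of the radius $|x|<1$ and the explicit remark on the $r\mapsto-r$ step are points the paper leaves implicit, but they do not change the argument.
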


\begin{proof}
Since
\begin{equation}\label{int-repres-exp-log-2}
\frac{\ln(1+x)}{x}=\frac{w(x)}{\te^{w(x)}-1}\quad \text{and}\quad w=w(x)=\ln(1+x),
\end{equation}
we acquire
\begin{gather*}
\biggl(\biggl[\frac{\ln(1+x)}{x}\biggr]^r\biggr)^{(n)}
=\frac{\operatorname{d}^n}{\td x^n}\biggl(\biggl[\frac{w(x)}{\te^{w(x)}-1}\biggr]^r\biggr)\\
=\sum_{k=0}^{n}\biggl(\biggl[\frac{w}{\te^{w}-1}\biggr]^r\biggr)^{(k)} \bell_{n,k}\bigl(w'(x), w''(x), \dotsc,w^{(n-k+1)}(x)\bigr)\\
\to\sum_{k=0}^{n}\lim_{w\to0}\biggl[\biggl(\frac{w}{\te^{w}-1}\biggr)^r\biggr]^{(k)} \lim_{x\to0}\bell_{n,k}\biggl(\frac{1}{1+x}, -\frac{1}{(1+x)^2}, \\
\dotsc,(-1)^{n-k}\frac{(n-k)!}{(1+x)^{n-k+1}}\biggr), \quad x\to0\\
=\sum_{k=0}^{n}k!\Biggl[\sum_{m=0}^{k} \frac{(r)_{m}}{(k+m)!} \sum_{\ell=0}^m(-1)^{\ell}\binom{k+m}{m-\ell}S(k+\ell,\ell)\Biggr]\\
\times\bell_{n,k}\bigl(0!,-1!, \dotsc, (-1)^{n-k}(n-k)!\bigr)\\
=\sum_{k=0}^{n}k!\Biggl[\sum_{m=0}^{k} \frac{(r)_{m}}{(k+m)!} \sum_{\ell=0}^m(-1)^{\ell}\binom{k+m}{m-\ell}S(k+\ell,\ell)\Biggr]s(n,k),
\end{gather*}
where we used the series expansion~\eqref{Bern-Exp-Ser-Gen-Eq} in Theorem~\ref{Bern-Exp-Ser-Gen-thm} and employed the identity~\eqref{Bell(n-k)} and~\eqref{Bell=0!s(n-k)}. This means that the Maclaurin series expansion~\eqref{log-ser-2stirl-eq} is valid.
\par
Comparing~\eqref{log-ser-2stirl-eq} with~\eqref{real-power-log-Eq} in Theorem~\ref{real-power-log-thm} results in the identity~\eqref{log-exp-2stir-Eq}.
The proof of Theorem~\ref{log-exp-trans-thm} is thus complete.
\end{proof}

\section{Conclusions}
In this paper, we established thirteen Maclaurin series expansions of several functions. These expansions are~\eqref{Helms-variant-ser} in Theorem~\ref{Helms-variant-ser-thm}, \eqref{Konwn-exp-results} in Theorem~\ref{Konwn-exp-results-thm}, \eqref{log-cosh-ser}, \eqref{exp+1-ser-log}, \eqref{log-sinh-x-ser}, \eqref{Bell-exp-results} in Theorem~\ref{Bell-exp-results-thm}, \eqref{Sqrt-log-Eq} in Theorem~\ref{Sqrt-log-thm}, \eqref{real-power-log-Eq} in Theorem~\ref{real-power-log-thm}, \eqref{Bern-Exp-Ser-2nd-Eq} in Theorem~\ref{Bern-Exp-Ser-2nd-thm}, \eqref{Bern-Exp-Ser-Eq} in Theorem~\ref{Bern-Exp-Ser-thm}, \eqref{Bern-Exp-Ser-Gen-Eq} in Theorem~\ref{Bern-Exp-Ser-Gen-thm}, \eqref{exp-log-trans-ser} in Theorem~\ref{exp-log-trans-thm}, and~\eqref{log-ser-2stirl-eq} in Theorem~\ref{log-exp-trans-thm}. These functions include
\begin{equation*}
\ln\frac{\te^x+1}{2}, \quad \ln\frac{\te^x-1}{x}, \quad \ln\cosh x, \quad \ln\frac{\sinh x}{x},\quad \biggl[\frac{\ln(1+x)}{x}\biggr]^r, \quad \biggl(\frac{\te^x-1}{x}\biggr)^r
\end{equation*}
for $r=\pm\frac{1}{2}$ and $r\in\mathbb{R}$.
\par
In this paper, we presented four determinantal expressions~\eqref{Bernoulli-Determin-One}, \eqref{Bernoulli-Determin-two}, \eqref{Bernoulli-Determin-3}, and~\eqref{Bernoulli-Determin-4} for the Bernoulli numbers $B_{2n}$, and then we also derived three recursive relations~\eqref{Bernou-REcurs-Relat-Eq}, \eqref{Bernoulli-Recursive-2}, and~\eqref{Bernoulli-Recursive-T3} for the Bernoulli numbers $B_{2n}$.
\par
In this paper, we found out three closed-form formulas~\eqref{Equal=0-Stirl-Bern}, \eqref{Equal-Stirl-Bern2nd}, and~\eqref{Equal-Stirl-Bern3rd} for the Bernoulli numbers $B_{2n}$ and the generalized Bernoulli numbers $B_n^{(r)}$ in terms of the Stirling numbers of the second kind $S(n,k)$, meanwhile we deduced two combinatorial identities~\eqref{Equal=0-Stirl} and~\eqref{Equal-Stirl-Bern2nd=0} for the Stirling numbers of the second kind $S(n,k)$.
\par
In this paper, we acquired two combinatorial identities~\eqref{diag-1st-stirl-eq} and~\eqref{diag-2nd-stirl-eq}, which can be regarded as diagonal recursive relations, for the Stirling numbers of the first and second kinds $s(n,k)$ and $S(n,k)$.
\par
In this paper, we recovered the integral representation~\eqref{falling-2ndbER} and the closed-form formula~\eqref{Nemes-1st-Bernoulli}, and gave an alternative explicit and closed-form formula~\eqref{real-power-log-r=-1} for the Bernoulli numbers of the second kind $b_n$ in terms of the Stirling numbers of the first kind $s(n,k)$.
\par
In this paper, we obtained the identities~\eqref{Conn-Stirl-1st-2nd}, \eqref{Stirl-1st-2nd-conn}, and~\eqref{log-exp-2stir-Eq} connecting the Stirling numbers of the first and second kinds $s(n,k)$ and $S(n,k)$.
\par
The most highlights of this paper include the unification of the generating functions of the Bernoulli numbers $B_n$ and the Stirling numbers of the second kind $S(n,k)$, the unification of the generating functions of the Bernoulli numbers of the second kind $b_n$ and the Stirling numbers of the first kind $s(n,k)$, and the disclosure of the transformations through the relations in~\eqref{int-repres-exp-log} and~\eqref{int-repres-exp-log-2} between these two unifications.

\section{Declarations}

\paragraph{\bf Acknowledgements}
Not applicable.

\paragraph{\bf Funding}
The author was partially supported by the Youth Project of Hulunbuir City for Basic Research and Applied Basic Research.

\paragraph{\bf Institutional Review Board Statement}
Not applicable.

\paragraph{\bf Informed Consent Statement}
Not applicable.

\paragraph{\bf Ethical Approval}
The conducted research is not related to either human or animal use.

\paragraph{\bf Availability of Data and Material}
Data sharing is not applicable to this article as no new data were created or analyzed in this study.

\paragraph{\bf Competing Interests}
The author declares that he has no any conflict of competing interests.

\paragraph{\bf Use of AI tools declaration}
The author declares he has not used Artificial Intelligence (AI) tools in the creation of this article.

\end{document}